\documentclass[lettersize,journal]{IEEEtran}
\usepackage{amsmath,amsfonts}
\usepackage{amsthm}
\usepackage{algorithmic}
\usepackage{algorithm}
\usepackage{array}
\usepackage[caption=false,font=normalsize,labelfont=sf,textfont=sf]{subfig}
\usepackage{textcomp}
\usepackage{stfloats}
\usepackage{url}
\usepackage{verbatim}
\usepackage{graphicx}
\usepackage{cite}
\usepackage{booktabs}
\newtheorem{thm}{Theorem}
\newtheorem{lem}{Lemma}
\newtheorem{exm}{Example}
\newtheorem{asm}{Assumption}
\newtheorem{defin}{Definition}
\newtheorem{rem}{Remark}
\begin{document}

\title{on a stochastic column-block bregman method for nonlinear systems}

\author{Wendi Bao, Naiyu Jiang, Xing Lili, Li Weiguo\\~\IEEEmembership{College of Science,	China University of Petroleum, Qingdao 266580, P.R. China}}

\markboth{TRANSACTIONS ON IMAGE PROCESSING}%
{Naiyu Jiang, Wendi Bao \MakeLowercase{\textit{et al.}}:The stochastic column-block nonlinear Bregman method for solving nonlinear systems of equations}


\maketitle

\begin{abstract}
In this paper, we proposed a stochastic column-block nonlinear Bregman method with uniform probability to find the sparse solution of nonlinear systems of equations. Under certain assumption we conclude the upper bound for the convergence rates of this new method. Numerical experiments demonstrate that the efficiency of the new method.
\end{abstract}

\begin{IEEEkeywords}
Nonlinear equations, Gradient descent method, Gauss-Seidel method, Block method, Bregman distance.
\end{IEEEkeywords}

\section{Introduction}
\IEEEPARstart{F}{inding} the sparse solution of nonlinear systems of equations
\begin{equation}\label{eq:1.1}
	f(x)=0,
\end{equation}
,where $x\in\mathbb{R}^n$ and $f(x):\mathbb{R}^n\to\mathbb{R}^m$, has many applications, such as compressed sensing \cite{Needell12}, image processing \cite{Dong16}, deep learning \cite{Dettmers19}.
This problem can be reformulated as the nonlinear constrained optimization problem
\begin{equation}\label{eq:1.2}
	\mathop{\min}_{x\in \mathcal{D}}\varphi(x),\quad s.t. \quad f(x)=0.
\end{equation}	
where $\varphi$ is a convex and nonsmooth function which is also called the sparsity inducing function. We denote $\varphi:\mathbb{R}^n\to \overline{\mathbb{R}}:\mathbb{R}\cup\{+\infty\}$ is convex with $\mathrm{dom }\varphi=\{x\in \mathbb{R}^n:\varphi(x)<\infty\}\neq\emptyset$. 

The nonlinear Kaczmarz (NK) method \cite{Wang21} was proposed to solve the nonlinear systems of equations (\ref{eq:1.1}), which only one equation was used at each iteration. Since its efficient and less storage, many variations has been developed, for instance, the maximum residual nonlinear Kaczmarz (MRNK) method \cite{Zhang23} , the pseudoinverse-free greedy block nonlinear method \cite{Lv24}, and the residual-based weighted nonlinear Kaczmarz (RBWNK) method \cite{Ye24}. Besides, motivated by the doubly stochastic block Gauss-Seidel (DSBGS) method Bao et al. \cite{Bao25} proposed the doubly stochastic block method for nonlinear equations (DSBN) method, which unifies some stochastic row-block and column-block methods. And the stochastic column-block method for nonlinear equations was developed, which use uniform probability to select columns.   

The Bregman method \cite{Yin08} was proposed to solve to solve the regularized version of the basis pursuit problem 
\begin{equation*}
	\mathop{\min}_{x\in\mathbb{R}^n}\lambda\|x\|_1+\frac{1}{2}\|x\|_2^2\quad s.t.\quad Ax=b.
\end{equation*} 
In \cite{Lorenz14_1} \cite{Lorenz14_2}, Kaczmarz method was introduced into the Bregman method obtain the sparse Kaczmarz method. Sch\"{o}pfer et al. \cite{Schopfer19} developed the randomized version of the sparse Kaczmarz (RSK) method and gave its convergence analysis. In \cite{Zhang22}, a weighted randomized sparse Kaczmarz (WRaSK) method was developed, which selects the $i$-th projection hyperplane with probability proportional to $|\langle a_i,x_k\rangle-b_i|^p$, where $0<p<\infty$, for possible acceleration. Yuan et. al \cite{Yuan22} used sketch-and-project to Bregman method and proposed the sketched Bregman projection (SBP) method. They generalized the concept of adaptive sampling to SBP methods, and showed how the single-step progress—measured by the Bregman distance—depends directly on a sketched loss function. Tondji et al. \cite{Tondji23} introduced a parallel (mini batch) version of RSK based on averaging several Kaczmarz steps and proposed Randomized sparse Kaczmarz with averaging (RSKA). Inspired of RSKA method Xiao et. al \cite{Xiao24} introduced a flexible greedy selection strategy, which is adopted to adaptively choose the subset corresponding to the subvectors of the residuals with relatively large norms and developed the greedy average block sparse Kaczmarz (GABSK) method. Niu et. al \cite{Niu25a} proposed a fast block sparse Kaczmarz (FBSK) method based on the Motzkin criterion and it converges linearly to the sparse solutions of the linear systems.

While solving the sparse solution of nonlinear systems of equations is still in the developmental stage. By introducing the Bregman projection to nonlinear Kaczmarz (NK) method \cite{Wang21}, Gower \cite{Gower24} proposed the nonlinear Bregman-Kaczmarz (NBK) method and relaxed nonlinear Bregman Kaczmarz (rNBK) method. To accelerate the speed of convergence, Xiao et al. proposed the averaging block nonlinear Bregman Kaczmarz method which is the row-block type of the NBK method.

To accelerate the convergence of the NBK method, we consider sampling some columns of the gradient with uniform probability at each iteration. We assign a certain weight to the values computed for each column, sum these weighted values, multiply by the corresponding step size to perform the iteration, and propose the stochastic column-block nonlinear Bregman (SCBNB) method to find the sparse solution of nonlinear equations.

The notations in this paper are expressed as follows. For any matrix $A\in \mathbb{R}^{m\times n}$, $\| \cdot\|$, $\sigma_{\min}$, $\sigma_{\max}$ denote the Euclidean norm, the maximum and minimum nonzero singular values of a matrix $A$, respectively. Let $\nabla f(x)$ be the Jacobi matrix of $f(x)$ and $\nabla f_{:,j}(x)$ is the $j$th column of the Jacobi matrix $\nabla f(x)$.

The rest of this paper is organized as follows. In Section \ref{sec2}, the SCBNB method is proposed and the convergence of SCBNB method is analyzed. We perform some numerical experiments in Section \ref{sec3}, which verify the efficiency of the SCBGD method and SCBNB method. Finally, we have the conclusions in Section \ref{sec4}.

\section{Stochastic column-block nonlinear Bregman method}
\label{sec2}

In this section, we consider solving (\ref{eq:1.2}). We first introduce some preliminaries. 

\begin{defin}
	The subdifferential of $\varphi$ at a point $x\in \mathrm{dom }\varphi$ is defined as 
	\begin{equation*}
		\resizebox{1\hsize}{!}{$\partial\varphi(x)=\{x^*\in \mathbb{R}^n:\varphi(x)+\langle x^*,y-x\rangle\leq\varphi(y) \quad\text{for all }\quad y\in \mathrm{dom } \varphi\}$}
	\end{equation*} 
	$x^*\in\partial\varphi(x)$ is called a subgradient of $\varphi$ at $x$.
\end{defin}
\begin{defin}
	The Bregman distance $D_{\varphi}^{x^*}(x,y)$ between $x,y\in dom \varphi$ with respect to $\varphi$ and a subgradient $x^*\in \partial\varphi(x)$ is defined as 
	\begin{equation*}
		D_{\varphi}^{x^*}(x,y)=\varphi(y)-\varphi(x)-\langle x^*,y-x\rangle
	\end{equation*}
\end{defin}
\begin{defin}
	Denote $\mathcal{B}$ be a nonempty convex set, $\mathcal{B}\cap\mathrm{dom}\varphi\neq\emptyset$, $x\in\mathrm{dom}\varphi$ and $x^*\in\partial\varphi$. The Bregman projection of $x$ onto $\mathcal{B}$ with respect to $\varphi$ and $x^*$ is the point $\Pi_{\mathcal{B}}^{x^*}(x)\in\mathcal{B}\cap\mathrm{dom}\varphi$ such that
	\begin{equation*}
		\Pi_{\mathcal{B}}^{x^*}(x)=arg\mathop{\min}_{y\in\mathcal{B}}D_\varphi^{x^*}(x,y).
	\end{equation*}
\end{defin}

\begin{lem}\label{lem:D} 
	If $\varphi$ is a $\gamma$-strongly convex function, then for $\forall x\in \mathcal{D}$ the following inequality holds
	\begin{align}\label{eq:D}
		D_{\varphi}^{x_{k+1}^*}(x_{k+1},\hat{x})
		&\leq D_{\varphi}^{x_{k}^*}(x_{k},\hat{x})+\langle x_{k+1}^*-x_{k}^*,x_k-\hat{x}\rangle\notag\\
		&\quad+\frac{1}{2\gamma}\|x_{k+1}^*-x_{k}^*\|_2^2.
	\end{align}
\end{lem}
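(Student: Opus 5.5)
The plan is to prove the inequality by expanding both Bregman distances from the definition and then controlling the single leftover term with the strong convexity of $\varphi$. Here $x_k^*\in\partial\varphi(x_k)$ and $x_{k+1}^*\in\partial\varphi(x_{k+1})$, and $\hat{x}$ is any point of $\mathcal{D}\cap\mathrm{dom}\,\varphi$. I read the "$\forall x\in\mathcal{D}$" in the statement as "$\forall \hat{x}\in\mathcal{D}$". I will use the standard fact, valid for $\gamma$-strongly convex $\varphi$, that
\begin{equation*}
D_{\varphi}^{x^*}(x,y)\geq \frac{\gamma}{2}\|x-y\|_2^2\quad\text{for all } x^*\in\partial\varphi(x).
\end{equation*}

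\textbf{Step 1 (three-point identity).} By the definition of $D_\varphi$,
\begin{align*}
D_{\varphi}^{x_{k+1}^*}(x_{k+1},\hat{x})-D_{\varphi}^{x_{k}^*}(x_{k},\hat{x})
&=\varphi(x_k)-\varphi(x_{k+1})+\langle x_k^*,\hat{x}-x_k\rangle\\
&\quad-\langle x_{k+1}^*,\hat{x}-x_{k+1}\rangle .
\end{align*}
Adding and subtracting $\langle x_{k+1}^*,x_k\rangle$ rewrites the right-hand side as
\begin{equation*}
\langle x_{k+1}^*-x_k^*,x_k-\hat{x}\rangle-D_{\varphi}^{x_{k+1}^*}(x_{k+1},x_k).
\end{equation*}
Here the Bregman distance in the last term is $D_{\varphi}^{x_{k+1}^*}(x_{k+1},x_k)=\varphi(x_k)-\varphi(x_{k+1})-\langle x_{k+1}^*,x_k-x_{k+1}\rangle$. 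This identity is exact, and checking it is routine bookkeeping.

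\textbf{Step 2 (bound the residual).} It remains to show
\begin{equation*}
-D_{\varphi}^{x_{k+1}^*}(x_{k+1},x_k)\leq\frac{1}{2\gamma}\|x_{k+1}^*-x_k^*\|_2^2 .
\end{equation*}
Note that $-D_{\varphi}^{x_{k+1}^*}(x_{k+1},x_k)=-\bigl(\varphi(x_k)-\varphi(x_{k+1})-\langle x_{k+1}^*,x_k-x_{k+1}\rangle\bigr)$.

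\begin{itemize}
\item The strong-convexity lower bound gives $D_{\varphi}^{x_{k+1}^*}(x_{k+1},x_k)\ge \frac{\gamma}{2}\|x_k-x_{k+1}\|^2\geq 0$, so the residual is already $\leq 0$. The desired inequality then holds trivially, since $\frac{1}{2\gamma}\|x_{k+1}^*-x_k^*\|^2\ge0$.
\item A sharper alternative is also available. Write the residual as $D_{\varphi}^{x_k^*}(x_k,x_{k+1})-\langle x_{k+1}^*-x_k^*,x_{k+1}-x_k\rangle$. Bound it by Cauchy--Schwarz together with the inequality $D_\varphi^{x_k^*}(x_k,x_{k+1})\le\langle x_{k+1}^*-x_k^*,x_{k+1}-x_k\rangle-\frac{\gamma}{2}\|x_{k+1}-x_k\|^2$, which follows from strong convexity and monotonicity. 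Then maximize the quadratic $t\mapsto \|x_{k+1}^*-x_k^*\|t-\frac{\gamma}{2}t^2$ to obtain the $\frac{1}{2\gamma}$ term.
\end{itemize}
I will present the simple sign argument, since it suffices.

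\textbf{Main obstacle.} The only delicate point is Step 1: keeping the subgradients attached to the correct base points, so that the leftover term is a genuine nonnegative Bregman distance $D_{\varphi}^{x_{k+1}^*}(x_{k+1},x_k)$ rather than a mixed expression. Once the identity is verified, the bound is immediate. In the paper's iteration the inequality is presumably meant with $x_{k+1}=\nabla\varphi^*(x_{k+1}^*)$, but the argument above does not need this.
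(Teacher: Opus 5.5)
\textbf{There is a sign error in Step 1.} Your Step 1 identity has the wrong sign, and the rest of your proof depends on it. Split $\langle x_{k+1}^*,\hat{x}-x_{k+1}\rangle=\langle x_{k+1}^*,\hat{x}-x_k\rangle+\langle x_{k+1}^*,x_k-x_{k+1}\rangle$. This gives
\begin{align*}
&D_{\varphi}^{x_{k+1}^*}(x_{k+1},\hat{x})-D_{\varphi}^{x_{k}^*}(x_{k},\hat{x})\\
&\quad=\langle x_{k+1}^*-x_k^*,x_k-\hat{x}\rangle+D_{\varphi}^{x_{k+1}^*}(x_{k+1},x_k),
\end{align*}
with a \emph{plus} sign in front of the last term. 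Your expression differs from the true one by exactly $2D_{\varphi}^{x_{k+1}^*}(x_{k+1},x_k)$.

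A one-line check shows this. Take $\varphi=\frac12\|\cdot\|_2^2$ (so $\gamma=1$ and $x^*=x$) and $\hat{x}=x_k$. Your identity plus the sign argument would then give $\frac12\|x_{k+1}-x_k\|_2^2\le 0$. In general, your argument would prove the lemma with the $\frac{1}{2\gamma}$ term deleted, which is false. In the quadratic case the lemma holds with equality, so that term is sharp. The inequality coming out ``trivially'' should have been a warning sign.

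\textbf{What actually has to be proved.} The leftover term is the \emph{nonnegative} quantity $D_{\varphi}^{x_{k+1}^*}(x_{k+1},x_k)$. The real content of the lemma is the upper bound $D_{\varphi}^{x_{k+1}^*}(x_{k+1},x_k)\le\frac{1}{2\gamma}\|x_{k+1}^*-x_k^*\|_2^2$.

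Your second bullet has the right ingredients, but it applies them to the wrongly signed residual. As written, it also never actually needs the quadratic maximization. Applied to the correct term, use two facts:
\begin{itemize}
\item $D_{\varphi}^{x_k^*}(x_k,x_{k+1})\ge\frac{\gamma}{2}\|x_{k+1}-x_k\|_2^2$;
\item $D_{\varphi}^{x_k^*}(x_k,x_{k+1})+D_{\varphi}^{x_{k+1}^*}(x_{k+1},x_k)=\langle x_{k+1}^*-x_k^*,x_{k+1}-x_k\rangle$.
\end{itemize}
With $t=\|x_{k+1}-x_k\|_2$, these give
\begin{align*}
D_{\varphi}^{x_{k+1}^*}(x_{k+1},x_k)&\le\langle x_{k+1}^*-x_k^*,x_{k+1}-x_k\rangle-\frac{\gamma}{2}t^2\\
&\le\|x_{k+1}^*-x_k^*\|_2\,t-\frac{\gamma}{2}t^2\le\frac{1}{2\gamma}\|x_{k+1}^*-x_k^*\|_2^2,
\end{align*}
which closes the proof.

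The paper states this lemma without proof. The usual argument in the Bregman--Kaczmarz literature identifies $D_{\varphi}^{x_{k+1}^*}(x_{k+1},x_k)$ with the Bregman distance of $\varphi^*$ between $x_k^*$ and $x_{k+1}^*$. It then uses that $\nabla\varphi^*$ is $\frac{1}{\gamma}$-Lipschitz. The direct bound above reaches the same estimate without conjugate duality.
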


Now we recall the nonlinear Bregman-Kaczmarz method, which take the Bregman projection of $x_k$ onto the set $H_k$, that is to solve 
\begin{equation*}
	x_{k+1}=arg\mathop{\min}_{x\in \mathcal{D}}D_{\varphi}^{x^*}(x_k,x),\quad s.t.\quad x\in H_k,
\end{equation*}
with 
\begin{align*}
	H_k:
	&=\{x\in\mathbb{R}^n:f_{i_k}(x_k)+\langle \nabla f_{i_k}(x_k),x-x_k\rangle=0\}\\
	&=H(\nabla f_{i_k}(x_k),\beta_k),
\end{align*} 
where $\beta_k=\langle \nabla f_{i_k}(x_k),x_k\rangle-f_{i_k}(x_k)$,  $i_{k}\in\{1,2,...,n\}$ and $x_k^*$ is the subgradient $\partial\varphi(x_k)$. 
If 
\begin{equation}\label{Hk dom varphi}
	H_k\cap\mathrm{dom}\partial\varphi\neq\emptyset,
\end{equation}
taking the Bregman projection of $x_k$ onto the set $H_k$, we can get the next iteration $x^*_{k+1}=x_k^*-t_{k,\varphi}\nabla f_{i_k}(x_k)$ and $x_{k+1}=\nabla\varphi^*(x_{k+1}^*)$ with 
\begin{equation*}
	t_{k,\varphi}\in arg\mathop{\min}_{t\in \mathbb{R}}\varphi^*(x_k^*-t\nabla f_{i_k}(x_k))+\beta_kt.
\end{equation*}
If (\ref{Hk dom varphi}) is not fulfilled then let $x^*_{k+1}=x_k^*-t_{k,\gamma}\nabla f_{i_k}(x_k)$ with the Polyak-like step size $t_{k,\gamma}=\gamma\frac{f_{i_k}(x_k)}{\|\nabla f_{i_k}(x_k)\|_*^2}$ with some norm $\|\cdot\|_*$ and some constant $\gamma>0$.
The algorithm of the NBK method is shown in Algorithm \ref{alg: NBK}. Further considering the relaxed version, we obtain the rNBK method shown in Algorithm \ref{alg: rNBK}.

\begin{algorithm}[H]
	\caption{NBK: Nonlinear Bregman-Kaczmarz method}\label{alg: NBK}
	\begin{algorithmic}
		\STATE 
		\STATE \textbf{Parameters:} $\gamma>0$, probabilities $p_i>0$ for $i=1,...,m$
		\STATE \textbf{Initialization:} Choose $x^*_0\in \mathbb{R}^n$,$x_0=\nabla\varphi^*(x_0^*)$
		\STATE \textbf{for} $k=1,2,\cdots $
		\STATE \hspace{0.5cm} Pick $i_k \in \{1,...,n\}$ with the probabilities $p_1,...p_m$
		\STATE \hspace{0.5cm} \textbf{if}{$f_{i_k}(x_k)\neq0$ and $\nabla f_{i_k}(x_k)\neq0$}
		\STATE \hspace{1cm} Set $\beta_k=\langle \nabla f_{i_k}(x_k),x_k\rangle-f_{i_k}(x_k)$
		\STATE \hspace{1cm} \textbf{if} $H_k\cap\mathrm{dom}\partial\varphi\neq\emptyset$
		\STATE \hspace{1.5cm} Find \resizebox{0.7\hsize}{!}{$t_k: t_k\in arg\mathop{\min}_{r\in\mathbb{R}}\varphi^*(x_k^*-t\nabla f_{i_k}(x_k))+t\beta_k$}
		\STATE \hspace{1cm} \textbf{else}
		\STATE \hspace{1.5cm} Set $t_k=\gamma\frac{f_{i_k}(x_k)}{\|\nabla f_{i_k}(x_k)\|_*^2}$
		\STATE \hspace{1.5cm} Compute $x^*_{k+1}=x^*_k-t_k\nabla f_{:,\xi_k}(x_k)^T$
		\STATE \hspace{1.6cm}Compute $x_{k+1}=\nabla\varphi^*(x_{k+1}^*)$
		\STATE \hspace{1cm}\textbf{endif}
		\STATE \hspace{0.5cm}\textbf{endif}
		\STATE \textbf{endfor}
		\STATE \textbf{return:} last iterate $x_{k}$
	\end{algorithmic}
\end{algorithm}

\begin{algorithm}[H]
	\caption{rNBK: Relaxed Nonlinear Bregman-Kaczmarz method}\label{alg: rNBK}
	\begin{algorithmic}
		\STATE 
		\STATE \textbf{Parameters:} $\gamma>0$, probabilities $p_i>0$ for $i=1,...,m$
		\STATE \textbf{Initialization:} Choose $x^*_0\in \mathbb{R}^n$,$x_0=\nabla\varphi^*(x_0^*)$
		\STATE \textbf{for} $k=1,2,\cdots $
		\STATE \hspace{0.5cm} Pick $i_k \in \{1,...,n\}$ with the probabilities $p_1,...p_m$
		\STATE \hspace{0.5cm} \textbf{if}{$f_{i_k}(x_k)\neq0$ and $\nabla f_{i_k}(x_k)\neq0$}
		\STATE \hspace{1cm} Compute $x^*_{k+1}=x^*_k-\gamma\frac{f_{i_k}(x_k)}{\|\nabla f_{i_k}(x_k)\|_*^2}\nabla f_{i_k}(x_k)$
		\STATE \hspace{1.1cm}Compute $x_{k+1}=\nabla\varphi^*(x_{k+1}^*)$
		\STATE \hspace{0.5cm}\textbf{endif}
		\STATE \textbf{endfor}
		\STATE \textbf{return:} last iterate $x_{k}$
	\end{algorithmic}
\end{algorithm}

The above methods are row type methods, next we consider the column-type method to solve the sparse solution of nonlinear equations. 

Since the update rule of $x_k^*$ in the Bregman-Kaczmarz method is similar to that in the Kaczmarz method, but with an additional convex parameter $\gamma$, we consider using the update rule of the SCBGD method to update $x_k^*$. The basic steps are similar to the nonlinear Bregman-Kaczmarz method, but when updating $x_{k+1}^*$, we refer to the SCBGD method to choose the direction of the update as $I_{:,\xi_k}\nabla f_{:,\xi_k}(x_k)^Tf(x_k)$ instead of $\nabla f_{i_k}(x_k)$ such that the elements of $x_k^*$ are only partially updated. Then we have
\begin{align*}
	x_{k+1}^*&=x_k^*-t_kI_{:,\xi_k}\nabla f_{:,\xi_k}(x_k)^Tf(x_k),\\
	x_{k+1}&=\nabla \varphi^*(x_{k+1}^*).
\end{align*}
The step size we refer to the SCBGD method, then we can obtain the SCBNB method to solve (\ref{eq:1.2}). The detailed steps of the SCBNB method we can obtain from Algorithm \ref{alg: SCBNB}.

	\begin{algorithm}[H]
		\caption{SCBNB: Stochastic Column-Block Nonlinear Bregman method}\label{alg: SCBNB}
		\begin{algorithmic}
			\STATE 
			\STATE \textbf{parameters:} $\delta\in(0,2)$, $q>0$, $\gamma>0$
			\STATE \textbf{initialization:} Choose $x^*_0\in \mathbb{R}^n$,$x_0=\nabla\varphi^*(x_0^*)$
			\STATE \textbf{for} $k=1,2,\cdots $
			\STATE \hspace{0.5cm} Pick $\xi_k \in \left\{J_1,J_2,\cdots,J_\tau\right\}$ with uniform probability
			\STATE \hspace{0.5cm} Compute $x^*_{k+1}=x^*_k-$
			\STATE \hspace{1cm}$\delta\frac{\gamma\|\nabla f_{:,\xi_k}(x_k)^Tf(x_k)\|_2^2}{\|\nabla f_{:,\xi_k}(x_k)\nabla f_{:,\xi_k}(x_k)^Tf(x_k)\|_2^2}I_{:,\xi_k}\nabla f_{:,\xi_k}(x_k)^Tf(x_k)$
			\STATE \hspace{0.5cm}Compute $x_{k+1}=\nabla\varphi^*(x_{k+1}^*)$
			\STATE \textbf{endfor}
			\STATE \textbf{return:} last iterate $x_{k}$
		\end{algorithmic}
	\end{algorithm}
	
	Next we analyze the convergence property of SCBNB on Bregman projections. Assume that $\hat{x}$ is the solution of problem (\ref{eq:1.2}).
	
	\begin{defin}
		\cite{Kaltenbacher08} Let $f(x)$ be a continuously differentiable mapping. Then $f(x)$ satisfies
		tangential cone condition in $\Omega\subset \mathbb{R}^n$ if there exists $0<\eta<1$ such that for any $x_1,x_2\in\Omega$,
		\begin{equation*}
			|f(x_1)-f(x_2)-\nabla f(x_1)(x_1-x_2)|\leq\eta|f(x_1)-f(x_2)|
		\end{equation*}   
	\end{defin}
	
	\begin{asm}\label{asm1}
		
		i) $f(x):\mathcal{D}\subseteq\mathbb{R}^n\to\mathbb{R}^m$ on a bounded closed $\mathcal{D}$ is derivable and the derivative of $f(x)$ is continuous in $\mathcal{D}$.
		
		ii) $f(x)$ satisfies the tangential cone condition. 
				
		iii) $\varphi:R^n\to R$ is proper, convex and lower semicontinuous.
		
		iv) $\nabla f(\hat{x})$ is full column rank matrix.	
	\end{asm}
	
		%
		%
	\begin{lem}
		\label{lem:1+eta^2}
		{\rm\cite{Zhang24}}
		If the nonlinear function f satisfies the local tangential cone condition in $\mathcal{D}$, then for $\forall x_1, x_2 \in D$  and an index subset $\tau\subseteq [m]$, we have
		\begin{equation}\label{eq:block ltcc}
			\|f_\tau(x_1)-f_\tau(x_2)\|_2^2\geq\frac{1}{(1+\eta)^2}\|f_\tau^\prime(x_1)(x_1-x_2)\|_2^2.
		\end{equation}
	\end{lem}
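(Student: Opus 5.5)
The plan is to apply the local tangential cone condition to the block of components $\tau$ alone and then use the reverse triangle inequality. Everything is done in the Euclidean norm. (The condition is written with $|\cdot|$ in the definition; we read it as holding componentwise, or blockwise, in $\|\cdot\|_2$, which is the form in which the result is cited.)

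First, fix $x_1,x_2\in\mathcal{D}$ and an index subset $\tau\subseteq[m]$. Restricting the tangential cone condition to the rows indexed by $\tau$ gives
\begin{equation*}
\|f_\tau(x_1)-f_\tau(x_2)-f_\tau^\prime(x_1)(x_1-x_2)\|_2\leq\eta\|f_\tau(x_1)-f_\tau(x_2)\|_2 .
\end{equation*}
If the condition is only given componentwise, i.e. $|f_i(x_1)-f_i(x_2)-\nabla f_i(x_1)(x_1-x_2)|\leq\eta|f_i(x_1)-f_i(x_2)|$ for each $i$, we square these inequalities and sum over $i\in\tau$. This yields exactly the blockwise inequality above. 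This reduction is the only point needing care: the constant $\eta$ must survive the passage from components to a block, and it does because both sides are sums of squares with the same factor $\eta^2$.

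Second, write
\begin{equation*}
f_\tau^\prime(x_1)(x_1-x_2)=\bigl(f_\tau(x_1)-f_\tau(x_2)\bigr)-\bigl(f_\tau(x_1)-f_\tau(x_2)-f_\tau^\prime(x_1)(x_1-x_2)\bigr).
\end{equation*}
The triangle inequality together with the blockwise bound from the first step then gives
\begin{equation*}
\|f_\tau^\prime(x_1)(x_1-x_2)\|_2\leq(1+\eta)\|f_\tau(x_1)-f_\tau(x_2)\|_2 .
\end{equation*}
Squaring both sides and dividing by $(1+\eta)^2>0$ produces (\ref{eq:block ltcc}). No further obstacle is expected, since the only ingredient beyond the triangle inequality is the blockwise form of the tangential cone condition established in the first step.
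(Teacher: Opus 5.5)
Your argument is correct and matches the standard proof. The paper gives no proof of its own and imports the lemma from \cite{Zhang24}, which uses the componentwise condition, applies the triangle inequality for each $i\in\tau$, and then squares and sums; you sum the squares first and then use the $\ell_2$ triangle inequality, which is only a cosmetic reordering. You are also right that reading the condition componentwise (or blockwise) is essential, since a single inequality in $\|\cdot\|_2$ for the whole vector $f$ would not restrict to an arbitrary block $\tau$.
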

	
	\begin{lem}\label{lem:D<=(x*-y*)(x-y)}
		Let $\varphi(x)$ be $\gamma$-strongly convex. For any $x,y\in\mathbb{R}^n$ and subgradients $x^*\in\partial\varphi(x)$, $y^*\in\partial\varphi(y)$, we have
		\begin{equation*}
			\frac{\gamma}{2}\|x-y\|_2^2\leq D_\varphi^{x^*}(x,y)\leq\langle x^*-y^*,x-y\rangle\leq\|x^*-y^*\|_2\|x-y\|_2.
		\end{equation*}
	\end{lem}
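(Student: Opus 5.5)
The plan is to prove the chain of three inequalities separately, using only the definitions of the subdifferential and the Bregman distance, together with the $\gamma$-strong convexity of $\varphi$. Throughout, I take $\gamma$-strong convexity in the subgradient form: for all $x,y$ and all $x^*\in\partial\varphi(x)$,
\begin{equation*}
\varphi(y)\geq\varphi(x)+\langle x^*,y-x\rangle+\frac{\gamma}{2}\|y-x\|_2^2 .
\end{equation*}

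\emph{Left inequality.} Rearranging the strong convexity inequality gives directly
\begin{equation*}
D_\varphi^{x^*}(x,y)=\varphi(y)-\varphi(x)-\langle x^*,y-x\rangle\geq\frac{\gamma}{2}\|x-y\|_2^2 .
\end{equation*}
This is immediate and requires no further work.

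\emph{Middle inequality.} Use the subgradient $y^*\in\partial\varphi(y)$ at the point $y$. The definition of the subdifferential, evaluated at the test point $x$, gives
\begin{equation*}
\varphi(x)\geq\varphi(y)+\langle y^*,x-y\rangle ,
\end{equation*}
that is, $\varphi(y)-\varphi(x)\leq\langle y^*,y-x\rangle$. Substituting this into the definition of $D_\varphi^{x^*}(x,y)$ yields
\begin{equation*}
D_\varphi^{x^*}(x,y)\leq\langle y^*,y-x\rangle-\langle x^*,y-x\rangle=\langle x^*-y^*,x-y\rangle .
\end{equation*}
The only point needing care is the sign bookkeeping when combining the two inner products. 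This identity-type step, the three-point form of the symmetrized Bregman distance, is the one I expect to be the main (though mild) obstacle.

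\emph{Right inequality.} This follows from the Cauchy--Schwarz inequality applied to the inner product $\langle x^*-y^*,x-y\rangle$.

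Finally, I will note two side remarks. Since $\varphi$ is finite on $\mathbb{R}^n$, the domain restriction in the definition of the subdifferential is vacuous. Moreover, strong convexity ensures that the subgradients used above exist wherever they are invoked.
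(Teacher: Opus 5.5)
Your proof is correct. The paper states this lemma without proof (it is a standard fact from the Bregman--Kaczmarz literature), and your argument is exactly the standard one: strong convexity gives the lower bound, the subgradient inequality for $y^*$ at the test point $x$ gives the middle bound, and Cauchy--Schwarz gives the upper bound. The middle step is equivalent to $D_\varphi^{x^*}(x,y)+D_\varphi^{y^*}(y,x)=\langle x^*-y^*,x-y\rangle$ with $D_\varphi^{y^*}(y,x)\ge 0$, so only the lower bound actually needs strong convexity.

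One small correction to your side remark. The subgradients $x^*,y^*$ are given by hypothesis, and in any case nonemptiness of $\partial\varphi$ comes from $\varphi$ being convex and finite on $\mathbb{R}^n$, not from strong convexity. Also, if strong convexity is defined as convexity of $\varphi-\frac{\gamma}{2}\|\cdot\|_2^2$, the subgradient form you assume follows because $x^*-\gamma x\in\partial\bigl(\varphi-\frac{\gamma}{2}\|\cdot\|_2^2\bigr)(x)$.
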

		\begin{lem}\label{lem:smooth}
		If $\varphi$ is a convex and lower semicontinuous, then the following statements are equivalent:
		
		i) $\varphi$ is M-smooth with respect to a norm $\|\cdot\|_2$,
		
		ii) $\varphi(y)\leq\varphi(x)+\langle\nabla\varphi(x),y-x\rangle+\frac{M}{2}\|x-y\|_2^2$ for all $x,y\in \mathbb{R}^n$,
		
		iii) $\langle\nabla\varphi(y)-\nabla\varphi(x),y-x\rangle\leq M\|x-y\|_2^2$ for all $x,y\in \mathbb{R}^n$.

	\end{lem}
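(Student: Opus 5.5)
We have to prove Lemma~\ref{lem:smooth}: the equivalence of (i) $M$-smoothness, meaning $\nabla\varphi$ is $M$-Lipschitz, $\|\nabla\varphi(x)-\nabla\varphi(y)\|_2\le M\|x-y\|_2$, (ii) the quadratic upper bound, and (iii) the monotonicity-type bound on $\nabla\varphi$. The plan is the cycle (i)$\Rightarrow$(ii)$\Rightarrow$(iii)$\Rightarrow$(i). Throughout, $\varphi$ is assumed differentiable, which is implicit in (i)--(iii) since they refer to $\nabla\varphi$.

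\textbf{(i)$\Rightarrow$(ii).} Write
\begin{equation*}
\varphi(y)-\varphi(x)-\langle\nabla\varphi(x),y-x\rangle=\int_0^1\langle\nabla\varphi(x+s(y-x))-\nabla\varphi(x),y-x\rangle\,ds.
\end{equation*}
Apply Cauchy--Schwarz and the Lipschitz bound to the integrand. This bounds it by $sM\|y-x\|_2^2$, and integrating over $s\in[0,1]$ gives $\frac{M}{2}\|x-y\|_2^2$.

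\textbf{(ii)$\Rightarrow$(iii).} Write (ii) once for the pair $(x,y)$ and once with the roles of $x$ and $y$ swapped. Adding the two inequalities cancels the function values. What remains is $\langle\nabla\varphi(y)-\nabla\varphi(x),y-x\rangle\le M\|x-y\|_2^2$.

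\textbf{(iii)$\Rightarrow$(i).} This is the main obstacle, and it is where convexity is essential.

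First, recover (ii) from (iii). Set $g(s)=\varphi(x+s(y-x))$. Apply (iii) to the points $x+s(y-x)$ and $x$ to get
\begin{equation*}
g'(s)-g'(0)\le sM\|y-x\|_2^2 .
\end{equation*}
Integrating over $s\in[0,1]$ gives (ii).

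Next, use a standard device: define $h(y)=\varphi(y)-\langle\nabla\varphi(x),y\rangle$. This function is convex and minimized at $x$, and it satisfies (ii). Minimizing the upper bound (ii) for $h$ over a step $y-\frac1M\nabla h(y)$ gives
\begin{equation*}
h(x)\le h(y)-\frac{1}{2M}\|\nabla h(y)\|_2^2 ,
\end{equation*}
which rewrites as
\begin{equation*}
\varphi(y)\ge\varphi(x)+\langle\nabla\varphi(x),y-x\rangle+\frac1{2M}\|\nabla\varphi(y)-\nabla\varphi(x)\|_2^2 .
\end{equation*}
Symmetrize this inequality in $x$ and $y$ and add the two copies. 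This yields co-coercivity,
\begin{equation*}
\frac1M\|\nabla\varphi(x)-\nabla\varphi(y)\|_2^2\le\langle\nabla\varphi(x)-\nabla\varphi(y),x-y\rangle .
\end{equation*}
Finally, Cauchy--Schwarz on the right-hand side gives the Lipschitz bound $\|\nabla\varphi(x)-\nabla\varphi(y)\|_2\le M\|x-y\|_2$, which is (i).

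The delicate points are two: justifying the use of the minimizer of $h$, which is guaranteed by convexity because $\nabla h(x)=0$, and confirming that lower semicontinuity together with finiteness makes $\varphi$ finite and differentiable everywhere so the line integrals are valid. This is an alternative route to the duality argument via $\varphi^*$ being $\frac1M$-strongly convex combined with Lemma~\ref{lem:D<=(x*-y*)(x-y)}; I would use the direct route.
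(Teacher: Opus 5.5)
The paper gives no proof of Lemma~\ref{lem:smooth}. It is quoted as a standard fact and used only through (iii) in the proof of Theorem~\ref{thm1}, so there is no argument of the paper's to compare against.

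Your cycle (i)$\Rightarrow$(ii)$\Rightarrow$(iii)$\Rightarrow$(i) is the standard textbook proof, as in Nesterov's treatment of functions with Lipschitz gradient, and every step checks out:
\begin{itemize}
\item the integral bound;
\item the symmetrization;
\item the recovery of (ii) from (iii) along segments, after dividing by $s>0$;
\item the tilted function $h$ with $\nabla h(x)=0$, which gives $h(x)\leq h(y)-\frac{1}{2M}\|\nabla h(y)\|_2^2$;
\item co-coercivity, followed by Cauchy--Schwarz.
\end{itemize}
You are right that convexity is indispensable for closing the cycle. Any differentiable concave function, e.g.\ $-t^4$ on $\mathbb{R}$, satisfies (ii) and (iii) for every $M\geq 0$, although its gradient is not Lipschitz.

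One point in your closing remark should be corrected: lower semicontinuity does not give differentiability. For example, $\|x\|_1$ is finite, convex and continuous but not differentiable. Finiteness and differentiability of $\varphi$ on all of $\mathbb{R}^n$ are part of the statement itself, since (i)--(iii) evaluate $\nabla\varphi$ at arbitrary points. Once that is granted, lower semicontinuity is automatic, because a finite convex function on $\mathbb{R}^n$ is continuous, and it plays no role.

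What actually justifies integrating $g'$ in your (iii)$\Rightarrow$(ii) step is that a convex function differentiable on all of $\mathbb{R}^n$ is automatically $C^1$. More elementarily, $g(s)=\varphi(x+s(y-x))$ is a finite convex function of one variable, hence locally Lipschitz and absolutely continuous. In (i)$\Rightarrow$(ii), continuity of $\nabla\varphi$ is given directly.

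Finally, the step $y-\frac{1}{M}\nabla h(y)$ tacitly assumes $M>0$. The degenerate case $M=0$ is immediate, since (ii) together with convexity then forces $\varphi$ to be affine.
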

	
	\begin{lem}\label{lem:sigma}
		\cite{Jiang25}Assume that Assumption \ref{asm1} i) holds true. Then there exists $\underline{\sigma}$ and $\overline{\sigma}$ such that
		\begin{equation*}
			\underset{x\in\mathcal{D}}{\inf}\ \sigma_{\min}(\nabla f(x))=\underline{\sigma}>0,\
			\underset{x\in\mathcal{D}}{\sup}\ \sigma_{\max}(\nabla f(x))=\overline{\sigma}<\infty.
		\end{equation*} 
		\begin{proof}
			Since $\nabla f(x)$ is continuous in $\mathcal{D}$ and the singular value of $\nabla f(x)$ is a continuous function of $\nabla f(x)$, we can get the singular value of $\nabla f(x)$ is a continuous function of $x$.
			What's more, $\mathcal{D}$ is a bounded closed and $\sigma_{\min}(x_k)$ is continuous function of $x$, there exist a point $\underline{x}\in\mathcal{D}$ such that $\underline{\sigma}=\sigma_{\min}(\underline{x})=\underset{x\in\mathcal{D}}{\inf}\ \sigma_{\min}(x_k)$. Then
			\begin{equation*}
				\sigma_{\min}(\nabla f(x_k))\geq\underset{x\in\mathcal{D}}{\inf}\ \sigma_{\min}(\nabla f(x_k))=\underline{\sigma}>0.
			\end{equation*}
			Similarly there exists a point $\overline{x}\in\mathcal{D}$ such that $\overline{\sigma}=\sigma_{\max}(\overline{x})=\underset{x\in\mathcal{D}}{\sup}\ \sigma_{\max}(x_k)<\infty$. Then
			\begin{equation*}
				\sigma_{max}(\nabla f(x_k))\leq\underset{x\in\mathcal{D}}{\sup}\ \sigma_{\max}(\nabla f(x_k))=\overline{\sigma}<\infty.
			\end{equation*}
		\end{proof}
	\end{lem}
	
	\begin{thm}\label{thm1}
		Assume Assumption \ref{asm1} holds and $\varphi$ is $\gamma$-strongly convex and M-smooth with respect to $\|\cdot\|_2$ and $\underline{\sigma}^4-\overline{\sigma}^4+\underline{\sigma}^2\overline{\sigma}^2>0$, and $a+b>0$, where $a=\underline{\sigma}^4>0$ and $b=-\overline{\sigma}^4+\underline{\sigma}^2\overline{\sigma}^2\leq0$. Assume that when $b=0$, there is $0<\eta<1$ and  when $b\neq0$, there is $0<\eta<\min\{1,\frac{a-b}{a+b}-2\frac{\sqrt{|ab|}}{a+b}\}$. If $0<\delta\leq min\{2,Q\}$ where $Q=2\left(\frac{\underline{\sigma}^4}{\overline{\sigma}^2}(1-\eta)-\frac{(\overline{\sigma}^2-\underline{\sigma}^2)(1+\eta)^2}{\overline{\sigma}^2(1-\eta)}\right)$, then the sequence $\{x_k\}$ and $\{x_k^*\}$ generated by Algorithm \ref{alg: SCBNB} satisfies
		\begin{equation}\label{eq: D_k<D_0}
			\mathbf{E}[D_{\varphi}^{x_{k+1}^*}(x_{k+1},\hat{x})]\leq (1-c)^{k+1}\mathbf{E}[D_{\varphi}^{x_{0}^*}(x_{0},\hat{x})],
		\end{equation}
		where \resizebox{0.9\hsize}{!}{$c=\frac{1}{M}\left[\frac{2\gamma\underline{\sigma}^4(1-\eta)^2-2\gamma(\overline{\sigma}^2-\underline{\sigma}^2)(1+\eta)^2\overline{\sigma}^2}{2\tau(1+\eta)^2\overline{\sigma}^2\underline{\sigma}^2(1-\eta)}\delta-\frac{\gamma\underline{\sigma}^4(1-\eta)}{2\tau(1+\eta)^2\overline{\sigma}^2\underline{\sigma}^2(1-\eta)}\delta^2\right]$}.
		Further, if $x_0^*=x_0=0$, the error between $x_k$ and the solution $\hat{x}$ satisfies:
		\begin{equation}\label{eq: x_k-x*}
			\|x_k-\hat{x}\|_2^2\leq \frac{2}{\gamma} (1-c)^k\varphi(\hat{x}).
		\end{equation}
	\end{thm}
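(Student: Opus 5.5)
We start from Lemma \ref{lem:D}, applied with the SCBNB update $x_{k+1}^*-x_k^*=-t_k I_{:,\xi_k}g_k$, where $g_k=\nabla f_{:,\xi_k}(x_k)^Tf(x_k)$ and $t_k=\delta\gamma\|g_k\|_2^2/\|\nabla f_{:,\xi_k}(x_k)g_k\|_2^2$. This gives
\begin{align*}
D_{\varphi}^{x_{k+1}^*}(x_{k+1},\hat{x})&\leq D_{\varphi}^{x_{k}^*}(x_{k},\hat{x})-t_k\langle g_k,(x_k-\hat{x})_{\xi_k}\rangle+\frac{t_k^2}{2\gamma}\|g_k\|_2^2 .
\end{align*}
The quadratic term simplifies to $\frac{\delta^2\gamma}{2}\|g_k\|_2^6/\|\nabla f_{:,\xi_k}(x_k)g_k\|_2^4$. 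We then bound it using $\|\nabla f_{:,\xi_k}g_k\|_2\geq\underline{\sigma}\|g_k\|_2$. This relies on the full column rank in Assumption \ref{asm1} iv) together with Lemma \ref{lem:sigma}, since a column submatrix inherits the bounds $\underline{\sigma},\overline{\sigma}$. The resulting bound is $\frac{\delta^2\gamma}{2\underline{\sigma}^4}\|g_k\|_2^2$.

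For the cross term, we write $\langle g_k,(x_k-\hat{x})_{\xi_k}\rangle=\langle f(x_k),\nabla f_{:,\xi_k}(x_k)(x_k-\hat{x})_{\xi_k}\rangle$ and take the expectation over the uniform choice of $\xi_k$. The blocks $J_1,\dots,J_\tau$ partition $[n]$, so $\mathbf{E}_k[\nabla f_{:,\xi_k}(x_k)(x_k-\hat x)_{\xi_k}]=\frac1\tau\nabla f(x_k)(x_k-\hat x)$. Since $f(\hat x)=0$, the tangential cone condition gives $\langle f(x_k),\nabla f(x_k)(x_k-\hat x)\rangle\geq(1-\eta)\|f(x_k)\|_2^2$. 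However, $t_k$ depends on $\xi_k$, so we cannot simply pull it out of the expectation. We handle this by sandwiching it: $\delta\gamma/\overline{\sigma}^2\leq t_k\leq\delta\gamma/\underline{\sigma}^2$. We then split $t_k=\delta\gamma/\overline{\sigma}^2+(t_k-\delta\gamma/\overline{\sigma}^2)$. The deviation part is controlled by Cauchy–Schwarz, with $\|(x_k-\hat x)\|_2\leq\frac{1+\eta}{\underline{\sigma}}\|f(x_k)\|_2$ from Lemma \ref{lem:1+eta^2}. This is exactly where the negative term $(\overline{\sigma}^2-\underline{\sigma}^2)(1+\eta)^2$ in $Q$ and in $c$ should come from. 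For the quadratic term we use $\mathbf{E}_k\|g_k\|_2^2=\frac1\tau\|\nabla f(x_k)^Tf(x_k)\|_2^2\leq\frac{\overline{\sigma}^2}{\tau}\|f(x_k)\|_2^2$. The lower bounds are obtained analogously, with $\underline{\sigma}^2$. Collecting terms, we expect
\[
\mathbf{E}_k D_{k+1}\leq D_k-\frac{\gamma}{\tau}\Big(A\delta-B\delta^2\Big)\|f(x_k)\|_2^2,
\]
with $A,B$ matching the bracket in $c$ up to the factor $(1+\eta)^2\underline{\sigma}^2$. The condition $\delta\leq Q$, together with the assumed relations among $\eta$, $a$ and $b$ (which guarantee $A>0$), ensures that the bracket is positive.

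Next we convert $\|f(x_k)\|_2^2$ into the Bregman distance. Lemma \ref{lem:1+eta^2} gives $\|f(x_k)\|_2^2\geq\frac{\underline{\sigma}^2}{(1+\eta)^2}\|x_k-\hat x\|_2^2$. M-smoothness, via Lemma \ref{lem:smooth} ii) with $\nabla\varphi(x_k)=x_k^*$, gives $D_{\varphi}^{x_k^*}(x_k,\hat x)\leq\frac M2\|x_k-\hat x\|_2^2$. Together these yield $\|f(x_k)\|_2^2\geq\frac{2\underline{\sigma}^2}{M(1+\eta)^2}D_k$, so $\mathbf{E}_kD_{k+1}\leq(1-c)D_k$. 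Taking full expectation and inducting gives (\ref{eq: D_k<D_0}). For (\ref{eq: x_k-x*}), we use Lemma \ref{lem:D<=(x*-y*)(x-y)}: $\frac\gamma2\|x_k-\hat x\|_2^2\leq D_k$. With $x_0=x_0^*=0$, we get $D_0=\varphi(\hat x)-\varphi(0)$, which is at most $\varphi(\hat x)$ once we normalize $\varphi(0)=0$ (true for $\lambda\|x\|_1+\frac12\|x\|_2^2$); we state this normalization explicitly.

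The main obstacle is the cross-term step: the coupling between the random step size $t_k$ and the random block. The constants in $c$ indicate that this must be handled by the $\overline{\sigma}/\underline{\sigma}$ sandwich plus an error term. We will try the decomposition above first, and tune which of $\underline{\sigma}^2$ and $\overline{\sigma}^2$ appears in each bound until the coefficients match the stated $Q$ and $c$.
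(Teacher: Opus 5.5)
Your skeleton is the paper's: Lemma~\ref{lem:D}, then $\|\nabla f_{:,\xi_k}(x_k)g_k\|_2\ge\underline{\sigma}\|g_k\|_2$ for the quadratic term, then sandwiching $t_k$ between $\delta\gamma/\overline{\sigma}^2$ and $\delta\gamma/\underline{\sigma}^2$ and splitting off a deviation term. The paper separates positive and negative summands instead, which comes to the same thing. After that come the tangential cone condition for $\langle f(x_k),\nabla f(x_k)(x_k-\hat{x})\rangle$, the bound $\|f(x_k)\|_2\ge\frac{\underline{\sigma}}{1+\eta}\|x_k-\hat{x}\|_2$, smoothness and strong convexity. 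In the deviation step, keep the restriction $(x_k-\hat{x})_{\xi_k}$ and apply Cauchy--Schwarz twice (inside each block, then over the partition) to get $\frac{1}{\tau}\|\nabla f(x_k)^Tf(x_k)\|_2\|x_k-\hat{x}\|_2$. This is better than the paper's version, which bounds each block by the full $\|x_k-\hat{x}\|_2$ and then replaces $\sum_{\xi}\|\nabla f_{:,\xi}(x_k)^Tf(x_k)\|_2$ by $\|\nabla f(x_k)^Tf(x_k)\|_2$. That replacement is invalid, since the sum is $\ge$ the full norm, possibly by a factor $\sqrt{\tau}$.

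The gap is the final bookkeeping. You assert that your bracket matches the stated $c$ and that $\delta\le Q$ makes it positive; neither is true, and ``tuning'' cannot fix it. In the paper, the deviation term is converted into $\|x_k-\hat{x}\|_2^2$ via the upper bound $(1-\eta)\|f(x_k)\|_2\le\|\nabla f(x_k)(x_k-\hat{x})\|_2\le\overline{\sigma}\|x_k-\hat{x}\|_2$. This gives $\frac{\gamma\delta(\overline{\sigma}^2-\underline{\sigma}^2)}{\tau\underline{\sigma}^2(1-\eta)}\|x_k-\hat{x}\|_2^2$, which is where the $1/(1-\eta)$ comes from. The $(1+\eta)^2$ next to $\overline{\sigma}^2-\underline{\sigma}^2$ in $c$ is only the common denominator with the main term. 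Your route collects everything in $\|f(x_k)\|_2^2$ and converts once, using $D_\varphi^{x_k^*}(x_k,\hat x)\le\frac{M}{2}\|x_k-\hat x\|_2^2$. It yields instead
\[
c'=\frac{2\gamma}{M\tau(1+\eta)^2}\Bigl[\Bigl(\frac{\underline{\sigma}^2(1-\eta)}{\overline{\sigma}^2}-\frac{(1+\eta)(\overline{\sigma}^2-\underline{\sigma}^2)}{\underline{\sigma}\,\overline{\sigma}}\Bigr)\delta-\frac{\overline{\sigma}^2}{2\underline{\sigma}^2}\,\delta^2\Bigr].
\]
More fundamentally, the stated constants do not follow from these estimates. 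Your bounds and the paper's both produce the $\delta^2$ term $\frac{\gamma\overline{\sigma}^2}{2\tau(1+\eta)^2\underline{\sigma}^2}\delta^2$; it appears verbatim in the paper's penultimate display. The stated bracket instead has $\frac{\gamma\underline{\sigma}^2}{2\tau(1+\eta)^2\overline{\sigma}^2}\delta^2$, a smaller penalty when $\overline{\sigma}>\underline{\sigma}$, so the paper's last inequality does not follow.

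Likewise, $Q$ is not invariant under $f\mapsto sf$, even though that scaling leaves the iteration and $\eta$ unchanged. And $\delta\le\min\{2,Q\}$ does not force positivity. Take $\overline{\sigma}=\underline{\sigma}=\sigma$ with $\sigma^2(1-\eta)\ge 1$: then $\min\{2,Q\}=2$. Yet at $\delta=2$ both the stated bracket and yours are negative multiples of $\eta$. At that point the final step, which lower-bounds $\|f(x_k)\|_2^2$ by a multiple of $D_\varphi^{x_k^*}(x_k,\hat x)$, points the wrong way.

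What your argument does prove is the following. Use the threshold from the Remark after the proof, $\delta<2\frac{\underline{\sigma}^4(1-\eta)^2-(\overline{\sigma}^2-\underline{\sigma}^2)(1+\eta)^2\overline{\sigma}^2}{\overline{\sigma}^4(1-\eta)}$, which is the one consistent with the derivation. Under it, $c'\ge 2c_p>0$, where
\[
c_p=\frac{\gamma}{M\tau(1+\eta)^2}\Bigl[\Bigl(\frac{\underline{\sigma}^2(1-\eta)}{\overline{\sigma}^2}-\frac{(1+\eta)^2(\overline{\sigma}^2-\underline{\sigma}^2)}{\underline{\sigma}^2(1-\eta)}\Bigr)\delta-\frac{\overline{\sigma}^2}{2\underline{\sigma}^2}\,\delta^2\Bigr]
\]
is the constant the paper's chain of inequalities actually supports. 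So prove the contraction with $c'$ and this $\delta$-range, and say explicitly that it differs from the displayed $c$ and $Q$. The normalization $\varphi(0)=0$ you flagged for the last claim is indeed needed.
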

	
	\begin{rem}
		If $\varphi(x)=\|x\|_1+\frac{1}{2}\|x\|_2^2$, which is a $1-$strongly convex function, then from Theorem \ref{thm1} we have
		\begin{equation*}
			\mathbf{E}[D_{\varphi}^{x_{k}^*}(x_{k+1},\hat{x})]\leq (1-c)^k\mathbf{E}[D_{\varphi}^{x_{0}^*}(x_{0},\hat{x})],
		\end{equation*}
		where \resizebox{0.9\hsize}{!}{$c=\frac{1}{M}\left[\frac{2\gamma\underline{\sigma}^4(1-\eta)^2-2\gamma(\overline{\sigma}^2-\underline{\sigma}^2)(1+\eta)^2\overline{\sigma}^2}{2\tau(1+\eta)^2\overline{\sigma}^2\underline{\sigma}^2(1-\eta)}\delta-\frac{\gamma\underline{\sigma}^4(1-\eta)}{2\tau(1+\eta)^2\overline{\sigma}^2\underline{\sigma}^2(1-\eta)}\delta^2\right]$}.
		Further, if $x_0^*=x_0=0$, the error between $x_k$ and the solution $\hat{x}$ satisfies:
		\begin{equation*}
			\|x_k-\hat{x}\|_2^2\leq 2(1-c)^k\left(\|\hat{x}\|_1+\frac{1}{2}\|\hat{x}\|_2^2\right).
		\end{equation*}
	\end{rem}

	\section{Numerical experiments}
\label{sec3}

In this section we present some numerical experiments to verify the effectiveness of the SCBNB method for solving the sparse solution of nonlinear systems of equations. We compare the SCBNB method with the nonlinear Bregman-Kaczmarz (NBK) method, the relaxed nonlinear Bregman-Kaczmarz rNBK method, maximum residual nonlinear Bregman-Kaczmarz (MRNBK) method and the relaxed maximum residual nonlinear Bregman-Kaczmarz )(rMRNBK) method. The index $i$ for NBK and rNBK method is chosen by the probability criterion $\frac{|r_i|^2}{\|r\|_2^2}$. The index $i$ for MRNBK and rMRNBK method is chosen by $i=arg\mathop{\max}_{1\leq i\leq m} |r_i|^2$. We use the number of iteration steps (IT) and the elapsed computing time in seconds (CPU) to measure the effectiveness of the methods. The iteration termination criterion is set to the relative residual $\frac{\|f(x_k)\|_2^2}{\|f(x_0)\|_2^2}\leq10^{-6}$ or the IT exceed $1000$. In tables, the "$--$" defines the IT exceeding 1000.

All experiments are performed using MATLAB (version R2021a) on a personal computer with a 2.20 GHz central processing unit  (13th Gen Inte l(R) Core (TM) i9-13900HX), 32.0 GB memory and Windows operating system (64 bit Windows 11).

We consider solving the sparse solution of quadratic equations
\begin{equation*}
	f_i(x)=\frac{1}{2}\langle x,A^{(i)}x\rangle+\langle b^{(i)},x\rangle+c^{(i)}=0 \quad 1\leq i\leq m
\end{equation*}
with $A^{(i)}\in\mathbb{R}^{n\times n}$, $b^{(i)}\in\mathbb{R}^n$, $c^{(i)}\in\mathbb{R}$.

In this experiment, $A^{(i)}$ has two alternative strategies which is presented later and $b^{(i)}$ is sampled randomly with entries form the standard normal distribution.
The sparsity of the sparse solution $\hat{x}$ is denoted by '$sp$' and nonzero entries of original signal $\hat{x}$ is generated from the standard normal distribution. $b^{(i)}$ and $\hat{x}$ can be generated by MATLAB function $rand(n,1)$ and $sprandn(n,1,sp)$, respectively. Set 
\begin{equation*}
	c^{(i)}=-\left(\frac{1}{2}\langle x,A^{(i)}x\rangle+\langle b^{(i)},x\rangle\right).
\end{equation*}

We use $\varphi(x)=\lambda\|x\|_1+\frac{1}{2}\|x\|_2^2$ and set $\lambda=2$ the initial subgradient $x_0^*$ is generated from the standard normal distribution. Then the initial vector $x_0=\nabla\varphi^*(x_0^*)=S_\lambda(x_0^*)$, where
\begin{equation*}
	S_\lambda(x)=
	\begin{cases}
		x+\lambda, &x<-\lambda\\
		0, &|x|\leq -\lambda\\
		x-\lambda, &x>\lambda
	\end{cases}.
\end{equation*} 

\begin{exm}\label{ex:rand}
	$A^{(i)}$ is sampled from the standard normal distribution.
\end{exm} 
It can be generated by MATLAB function $randn(n,n)$. In Table \ref{tab:A=randn0.1} and Table \ref{tab:A=randn0.05} we compare the IT and CPU of the new method against the existing methods on different dimensions of the problem for $sp=0.1$ and $sp=0.05$ of $\hat{x}$, respectively. For the SCBNB method we take $q=25$ in the case of $m=200$,  $n=100$ and take $q=30$ in other cases. We find the IT and CPU of the SCBNB method is the best than other methods in most cases, which indicates the effectiveness of the SCBNB method for solving the sparse solution of nonlinear systems of equations. Figure \ref{figure A=randn0.1} and Figure \ref{figure A=randn0.05} shows for the NBK, rNBK, MRNBK and rMRNBK methods as the dimensionality increases, the convergence of a gradually slows down and become difficult to get sparse solution. While the SCBNB method still has respectable convergence rates as the dimensionality increases. The original signal and the recovered signals by the five methods are depicted in Figure \ref{figure randn solution}, respectively. We find the SCBNB method has the best performance to recovery the signal. 

In addition, let $x_0=x_0^*=0$ and $sp=0.05:0.05:0.15$. Set $m=100$ and $n$ ranging from $50$ to $150$ in increments of $20$. We asses the success rate if the NBK, rNBK, MRNBK, rMRNBK, and SCBNB methods in signal recovery. We conduct 100 experiments with different settings of $A$, $b$ and $\hat{x}$. The relative residuals can satisfy $\frac{\|f(x_k)\|_2^2}{\|f(x_0)\|_2^2}\leq10^{-6}$ within 3000 iterations, which is regarded as the success of the experiment. Similar as \cite{Niu25b} we visualize and compare the success rates achieved by the above five methods in Figure \ref{figure randn success}. It shows that the success rate of the SCBNB method has the best performance among the five methods. And NBK and MRNBK methods with exact step have better success rates than rNBK and rMRNBK methods which use inexact step. And after fixing $m$, the success rate decreases as $sp$ increases as well as $n$ increases.

\begin{table*}[!t]
	\caption{IT and CPU of the NBK, \textit{r}NBK, MRNBK, \textit{r}MRNBK and SCBNB methods for $A^{(i)}$ is sampled from the standard normal distribution with $sp=0.1$\label{tab:A=randn0.1}}
	\centering
	\begin{tabular}{cclllllllllll}
		\toprule
		$m$&  $n$& $sp$ & \multicolumn{2}{c}{NBK}& \multicolumn{2}{c}{rNBK}& \multicolumn{2}{c}{MRNBK}& \multicolumn{2}{c}{rMRNBK}& \multicolumn{2}{c}{SCBNB}\\
		& & &  IT&CPU& IT&  CPU& IT&CPU& IT& CPU& IT&CPU\\
		\midrule
		$200$&
		$100$& $0.1$& $681$&$0.7661$&$--$&$--$& $354$&$\mathbf{0.3887}$&$--$& $--$& $137$&$0.4892$\\
		$300$&$150$& $0.1$& $--$&$--$&$--$&$--$& $--$&$--$&$--$& $--$& $155$&$\mathbf{2.1753}$\\
		$400$&$200$& $0.1$& $--$&$--$&$--$&$--$& $--$&$--$&$--$& $--$& $446$&$\mathbf{14.0919}$\\
		$500$&$250$& $0.1$& $--$& $--$& $--$& $--$& $--$& $--$& $--$& $--$& $593$& $\mathbf{36.0715}$ \\\bottomrule\end{tabular}
\end{table*}

\begin{table*}[!t]
	\centering
	\caption{IT and CPU of the NBK, \textit{r}NBK, MRNBK, \textit{r}MRNBK and SCBNB methods for $A^{(i)}$ is sampled from the standard normal distribution with $sp=0.05$\label{tab:A=randn0.05}}
	\begin{tabular}{cclllllllllll}
		\toprule
		$m$&  $n$& $sp$ & \multicolumn{2}{c}{NBK}& \multicolumn{2}{c}{rNBK}& \multicolumn{2}{c}{MRNBK}& \multicolumn{2}{c}{rMRNBK}& \multicolumn{2}{c}{SCBNB}\\
		& & &  IT&CPU& IT&  CPU& IT&CPU& IT& CPU& IT&CPU\\
		\midrule
		$200$&
		$100$& $0.05$& $490$&$0.5538$&$--$&$--$& $399$&$0.4397$&$--$& $--$& $97$&$\mathbf{0.3469}$\\
		$300$&$150$& $0.05$& $511$&$3.1988$&$--$&$--$& $294$&$1.9988$&$--$& $--$& $126$&$\mathbf{1.7529}$\\
		$400$&$200$& $0.05$& $741$&$15.7613$&$--$&$--$& $604$&$14.0252$&$--$& $--$& $250$&$\mathbf{8.0981}$\\
		$500$&$250$& $0.05$& $--$&$--$&$--$&$--$& $672$&$29.4431$&$--$& $--$& $413$&$\mathbf{28.0365}$\\\bottomrule\end{tabular}
\end{table*}

\begin{figure*}[!t]
	\centering
	\subfloat[$m=200,n=100$]{\includegraphics[width=1.7in]{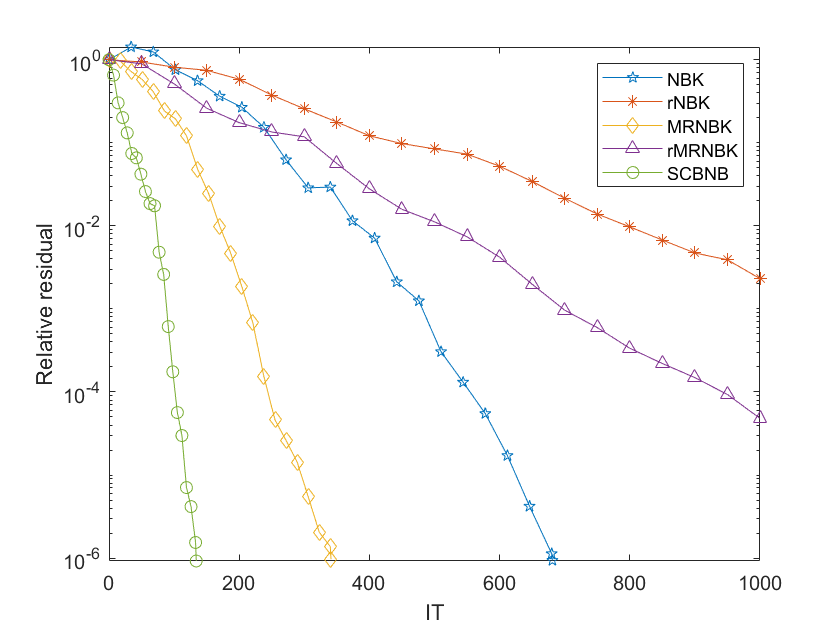}}
	\subfloat[$m=300,n=150$]{\includegraphics[width=1.7in]{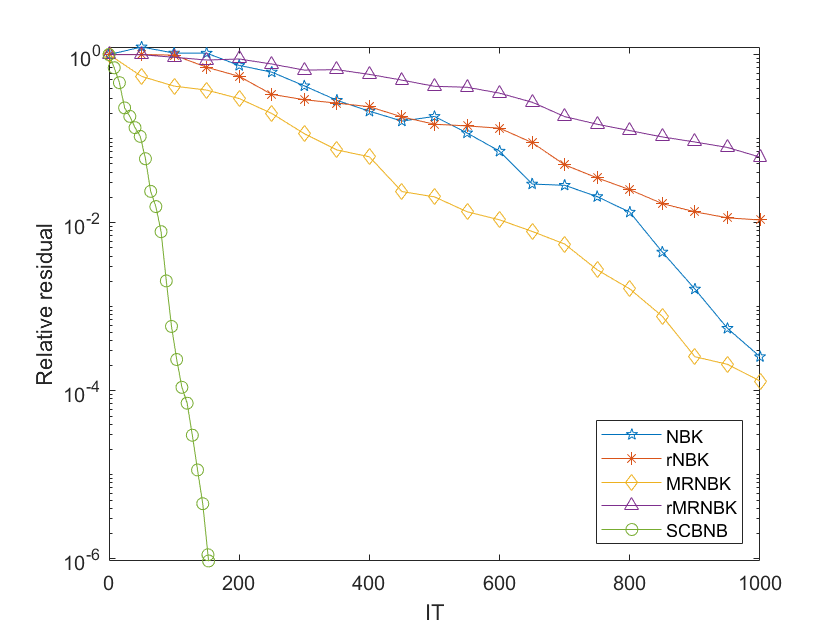}}
	\subfloat[$m=400,n=200$]{\includegraphics[width=1.7in]{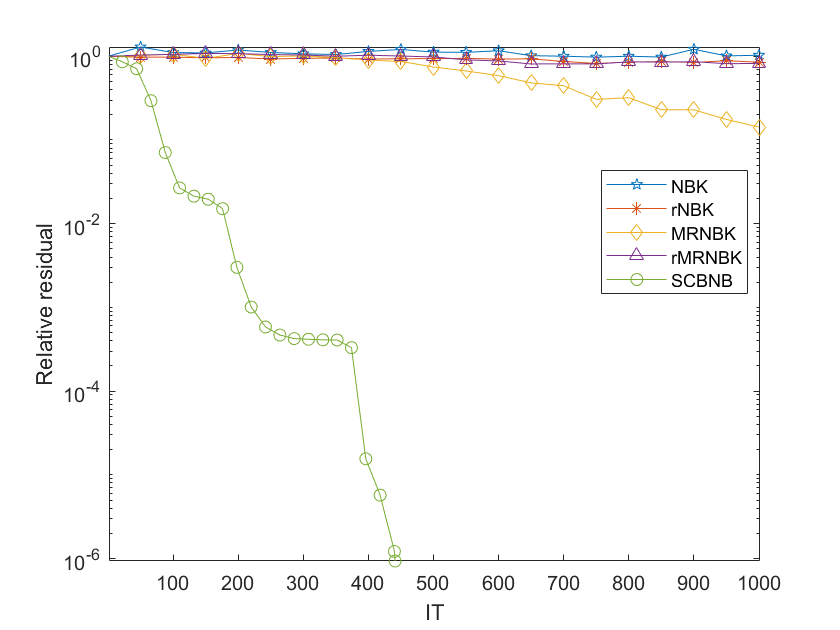}}
	\subfloat[$m=500,n=250$]{\includegraphics[width=1.7in]{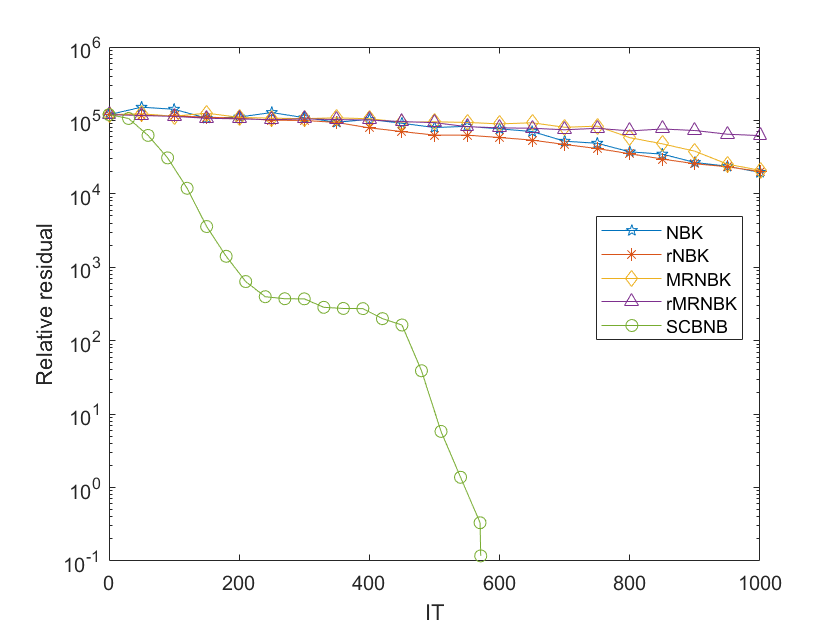}}
	\caption{Results for $A^{(i)}$ which is sampled from the standard normal distribution with $sp=0.1$.}
	\label{figure A=randn0.1}
\end{figure*}

\begin{figure*}[!t]
	\centering
	\subfloat[$m=200,n=100$]{\includegraphics[width=1.7in]{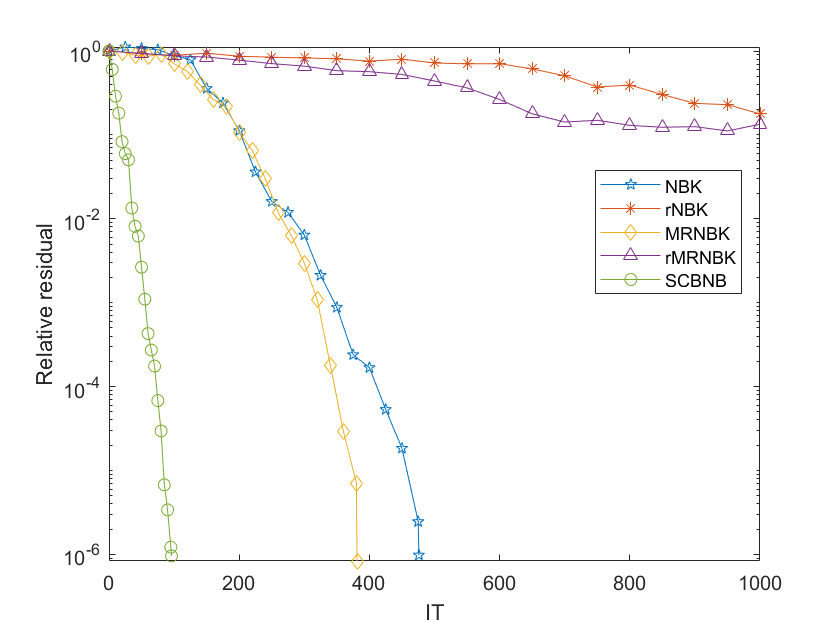}}
	\subfloat[$m=300,n=150$]{\includegraphics[width=1.7in]{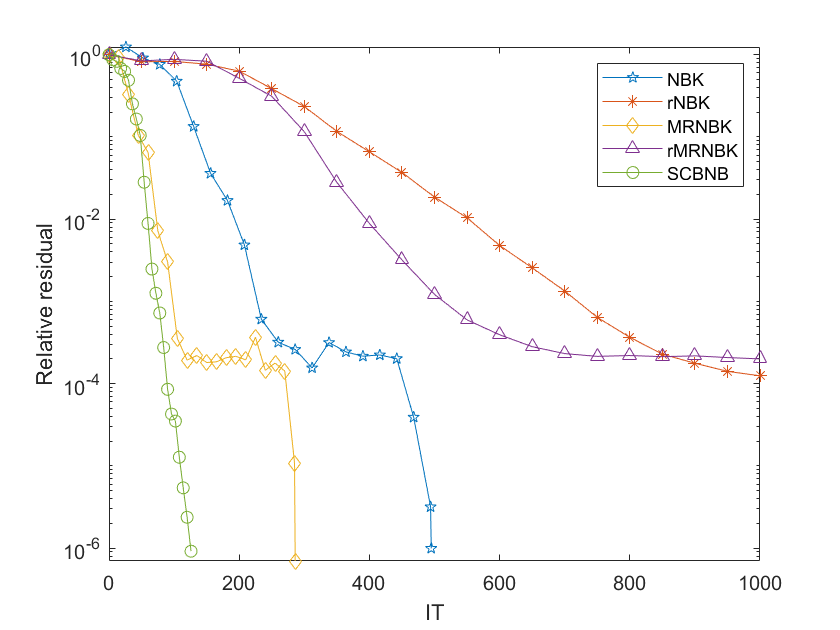}}
	\subfloat[$m=400,n=200$]{\includegraphics[width=1.7in]{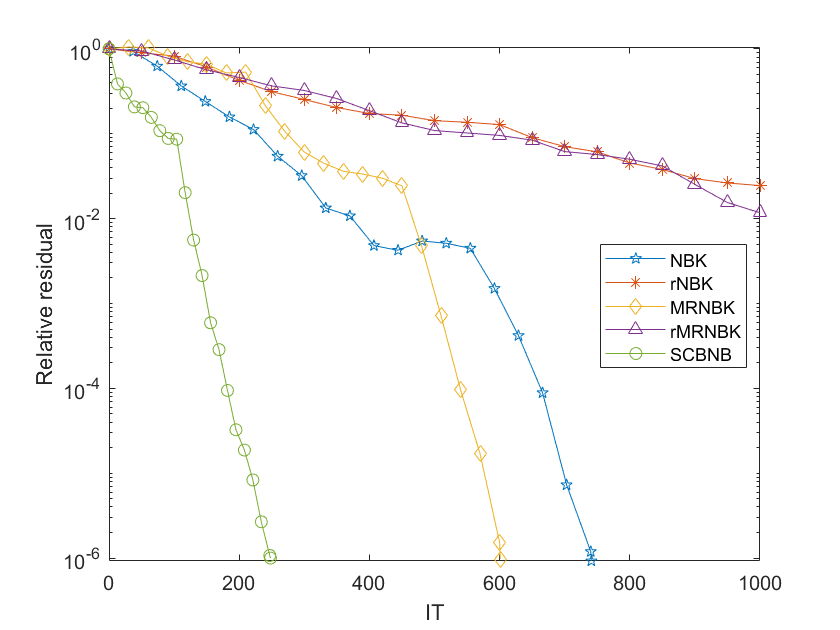}}
	\subfloat[$m=500,n=250$]{\includegraphics[width=1.7in]{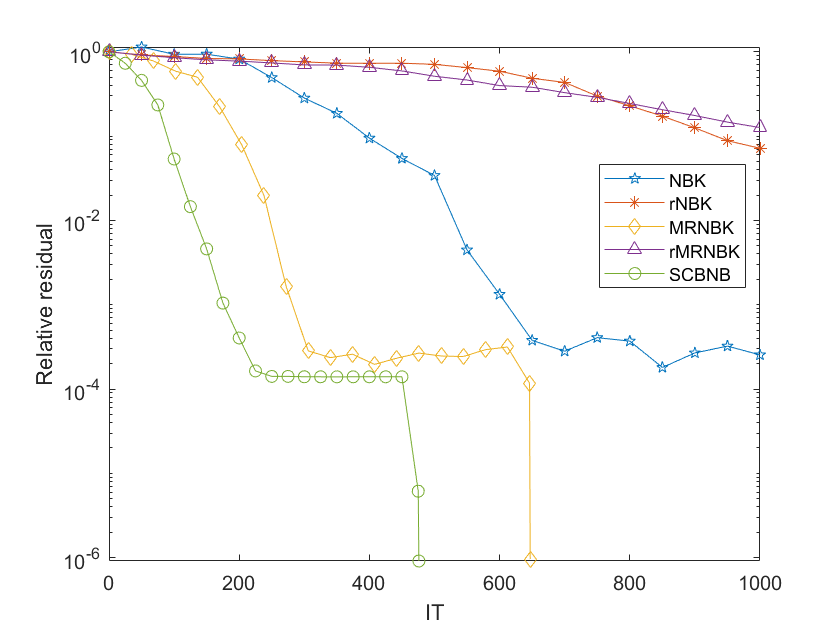}}
	\caption{Results for $A^{(i)}$ which is sampled from the standard normal distribution with $sp=0.05$.}
	\label{figure A=randn0.05}
\end{figure*}

\begin{figure*}[!t]
	\centering
	\subfloat[NBK]{\includegraphics[width=1.7in]{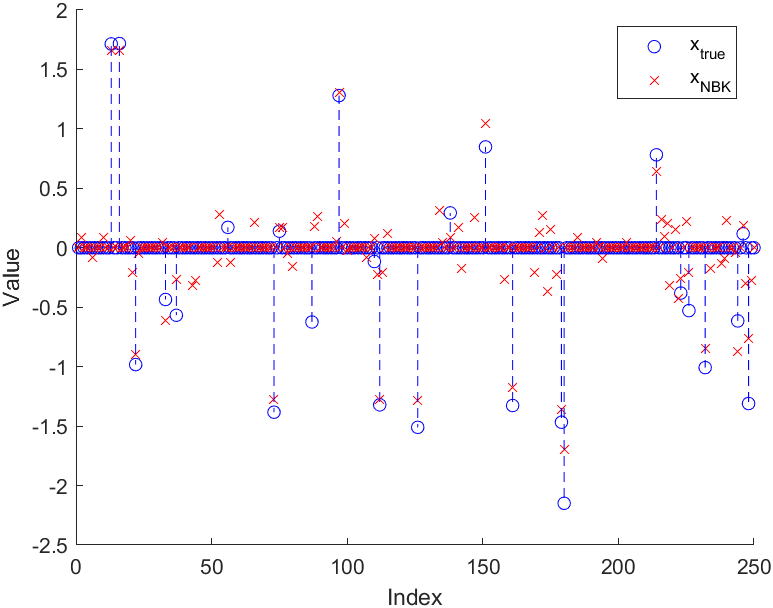}}
	\subfloat[rNBK]{\includegraphics[width=1.7in]{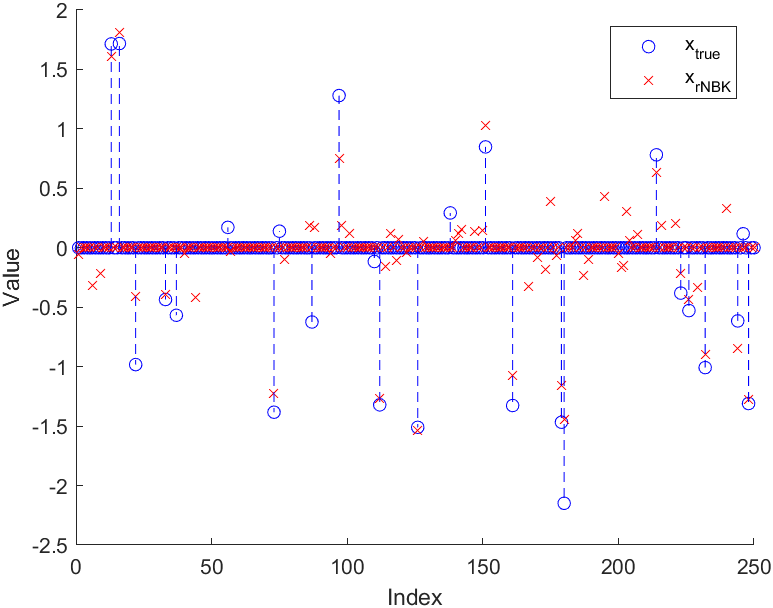}}
	\subfloat[MRNBK]{\includegraphics[width=1.7in]{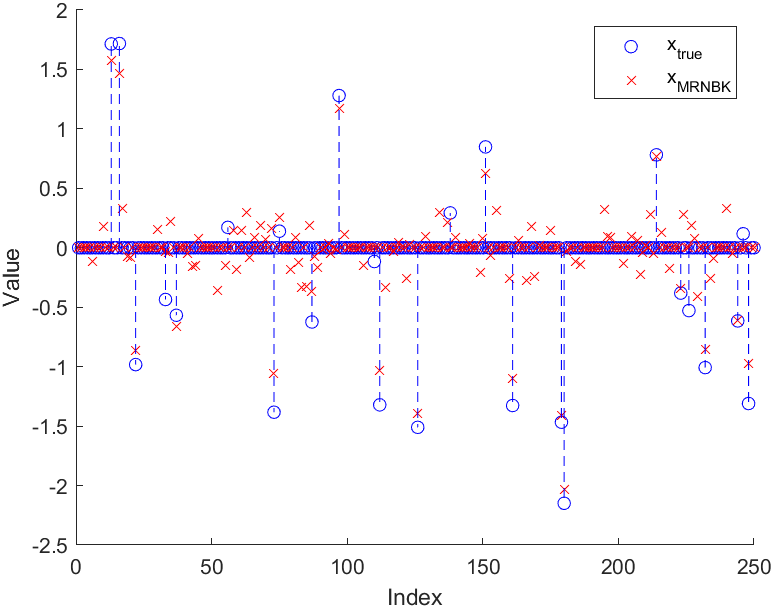}}\\
	\subfloat[rMRNBK]{\includegraphics[width=1.7in]{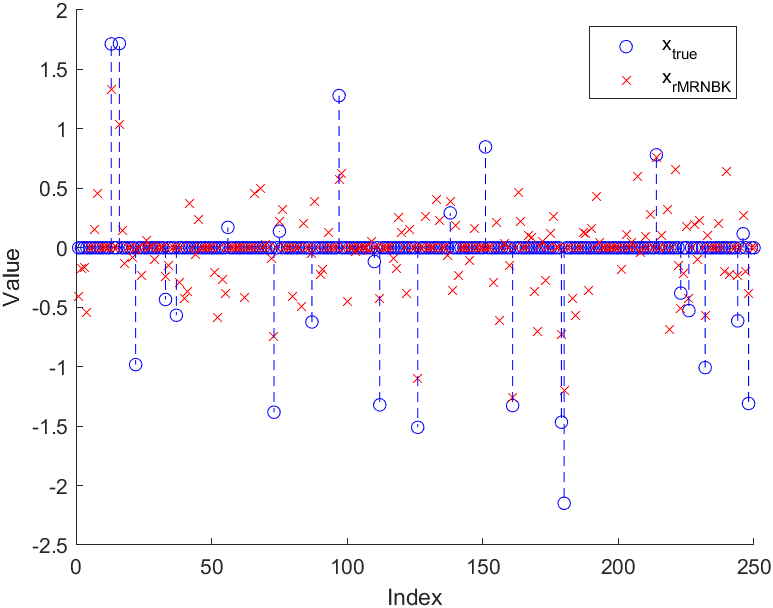}}
	\subfloat[SCBNB]{\includegraphics[width=1.7in]{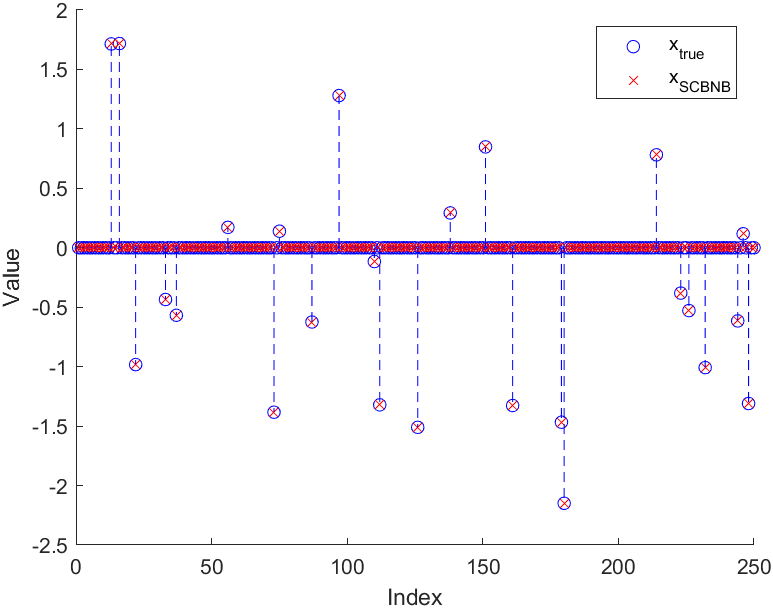}}
	\caption{The original signal and recovered signal of $A^{(i)}$ is sampled from the standard normal distribution with $m=500$, $n=250$ and $sp=0.1$.}
	\label{figure randn solution}
\end{figure*}

\begin{figure*}[!t]
	\centering
	\subfloat[NBK]{\includegraphics[width=1.7in]{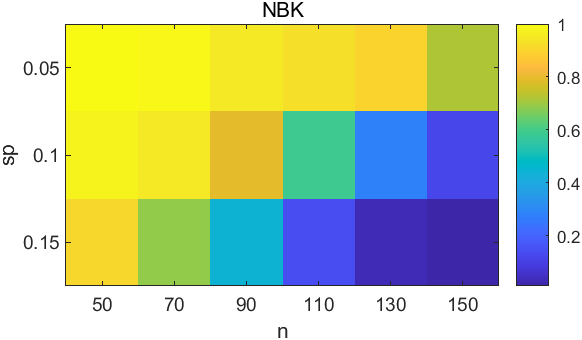}}
	\subfloat[rNBK]{\includegraphics[width=1.7in]{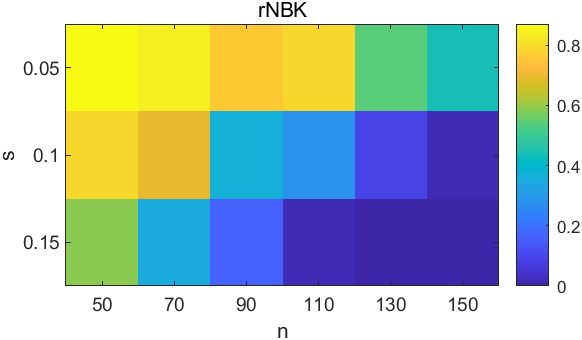}}
	\subfloat[MRNBK]{\includegraphics[width=1.7in]{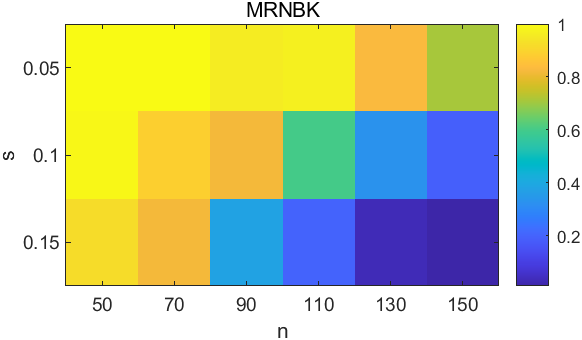}}\\
	\subfloat[rMRNBK]{\includegraphics[width=1.7in]{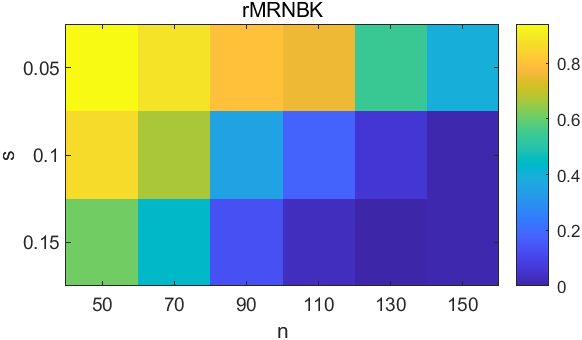}}
	\subfloat[SCBNB]{\includegraphics[width=1.7in]{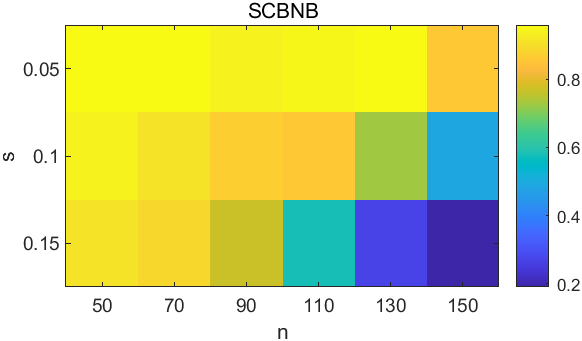}}
	\caption{Success rate of $A^{(i)}$ which is sampled from the standard normal distribution with $m=100$, $n=50:20:150$ and $sp=0.05:0.05:0.15$.}
	\label{figure randn success}
\end{figure*}

\begin{exm}
	$A^{(i)}$ is a random partial discrete cosine transform (DCT) matrix with j-th column is generated through the expression 
	\begin{equation*}
		A^{(i)}(:,j)=\cos(2\pi(j-1)\omega), \quad j=1,..,n,
	\end{equation*}
	where $\omega\in\mathbb{R}^{n\times1}$ is a column vector with uniformly and independently sampled elements from $[0,1]$.
\end{exm}

In Table \ref{tab:A=DCT0.1} and Table \ref{tab:A=DCT0.05}, we compare the IT and CPU of the new method against the existing methods on different dimensions of the problem for $sp=0.1$ and $sp=0.05$ of $\hat{x}$, respectively. Table \ref{tab:A=DCT0.1} and Table \ref{tab:A=DCT0.05} show that the SCBNB method outperforms the other methods. And Figure \ref{figure A=DCT0.1} and Figure \ref{figure A=DCT0.05} illustrate that as the dimension of the matrix increases, the SCBNB method still converges to the sparse solution at a faster rate, while the other methods slow down their convergence more significantly. In Figure \ref{figure DCT solution} we depict the original signal and the recovered signals by the five methods, respectively. We find the SCBNB method and MRNBK method have better recovery results. However, some of the signals recovered by the MRNBK method deviate from the original signal to a greater extent than those of the SCBNB method. Further, we also carried out the success rate of these methods for different $A$, $b$ and $\hat{x}$ settings. The basic settings of this experiment is same as in Example \ref{ex:rand}, with the difference being the generation of $A$. Figure \ref{figure DCT success} shows that the SCBNB method achieves better performance in terms of success rate compared with other methods, further validating the advantages of the SCBNB method.
\begin{table*}\label{tab:A=DCT0.1}
	\centering
	\caption{IT and CPU of the NBK, \textit{r}NBK, MRNBK, \textit{r}MRNBK and SCBNB methods for $A^{(i)}$ is a random partial DCT matrix with $sp=0.1$}
	\vspace{1mm}
	\begin{tabular}{cclllllllllll}
		\toprule
		$m$&  $n$& $sp$ & \multicolumn{2}{c}{NBK}& \multicolumn{2}{c}{rNBK}& \multicolumn{2}{c}{MRNBK}& \multicolumn{2}{c}{rMRNBK}& \multicolumn{2}{c}{SCBNB}\\
		& & &  IT&CPU& IT&  CPU& IT&CPU& IT& CPU& IT&CPU\\
		\midrule
		$200$&
		$100$& $0.1$& $460$&$0.5380$&$--$&$--$& $358$&$\mathbf{0.4106}$&$--$& $--$& $117$&$0.4383$\\
		$300$&$150$& $0.1$& $888$&$5.5576$&$--$&$--$& $769$&$4.7639$&$--$& $--$& $261$&$\mathbf{3.7346}$\\
		$400$&$200$& $0.1$& $--$&$--$&$--$&$--$& $--$&$--$&$--$& $--$& $454$&$\mathbf{14.2869}$\\
		$500$&$250$& $0.1$& $--$&$--$&$--$&$--$& $--$&$--$&$--$& $--$& $475$&$\mathbf{29.1530}$\\\bottomrule\end{tabular}
\end{table*}

\begin{table*}\label{tab:A=DCT0.05}
	\centering
	\caption{IT and CPU of the NBK, rNBK, MRNBK, rMRNBK and SCBNB methods for $A^{(i)}$ is a random partial DCT matrix with $sp=0.05$}
	\vspace{1mm}
	\begin{tabular}{cclllllllllll}
		\toprule
		$m$&  $n$& $sp$ & \multicolumn{2}{c}{NBK}& \multicolumn{2}{c}{rNBK}& \multicolumn{2}{c}{MRNBK}& \multicolumn{2}{c}{rMRNBK}& \multicolumn{2}{c}{SCBNB}\\
		& & &  IT&CPU& IT&  CPU& IT&CPU& IT& CPU& IT&CPU\\
		\midrule
		$200$&
		$100$& $0.05$& $118$&$0.1420$&$660$&$0.7446$& $92$&$\mathbf{0.1061}$&$363$& $0.3971$& $53$&$0.2132$\\
		$300$&$150$& $0.05$& $819$&$5.2926$&$--$&$--$& $--$&$--$&$--$& $--$& $180$&$\mathbf{2.5549}$\\
		$400$&$200$& $0.05$& $542$&$10.8815$&$--$&$--$& $388$&$9.0115$&$--$& $--$& $229$&$\mathbf{7.2557}$\\
		$500$&$250$& $0.05$& $846$&$35.9033$&$--$&$--$& $--$&$--$&$--$& $--$& $296$&$\mathbf{17.9961}$\\\bottomrule\end{tabular}
\end{table*}

\begin{figure*}[!t]
	\centering
	\subfloat[$m=200,n=100$]{\includegraphics[width=1.7in]{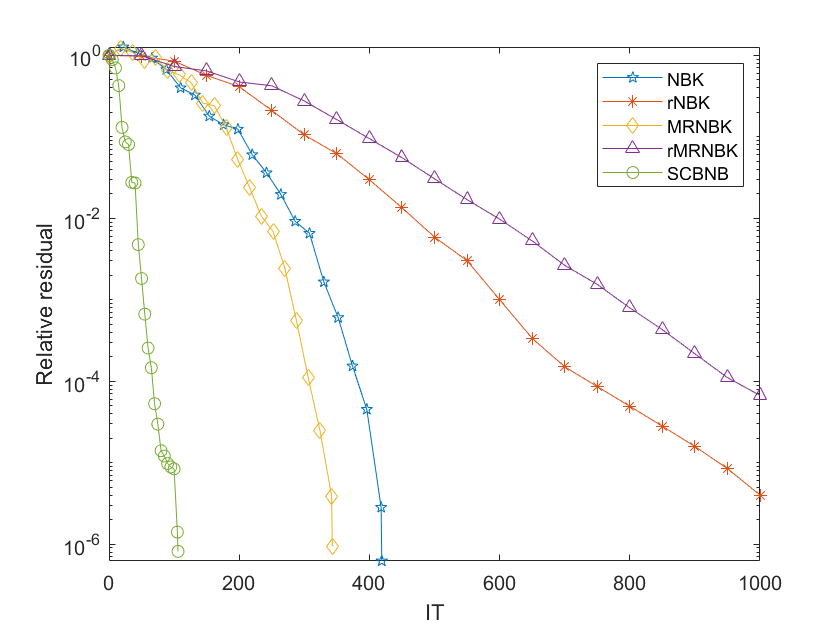}}
	\subfloat[$m=300,n=150$]{\includegraphics[width=1.7in]{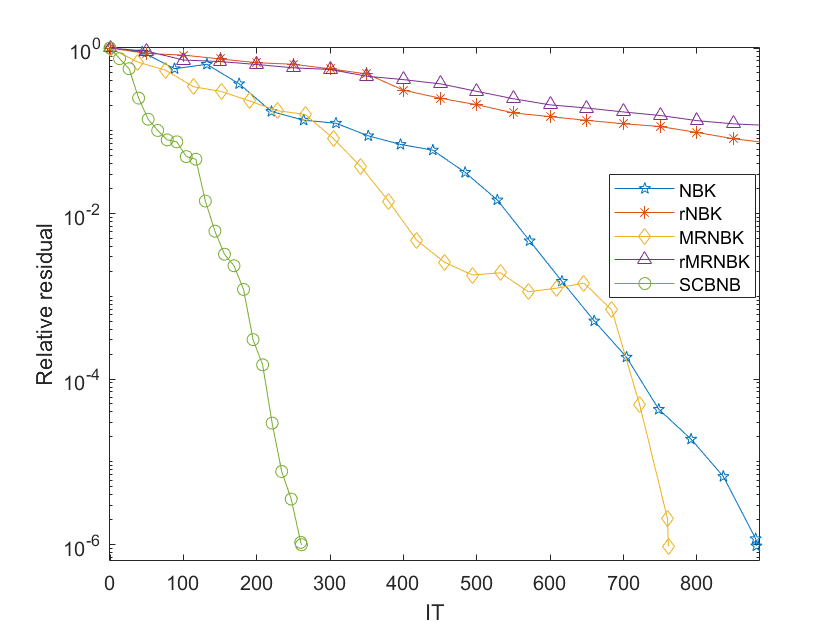}}
	\subfloat[$m=400,n=200$]{\includegraphics[width=1.7in]{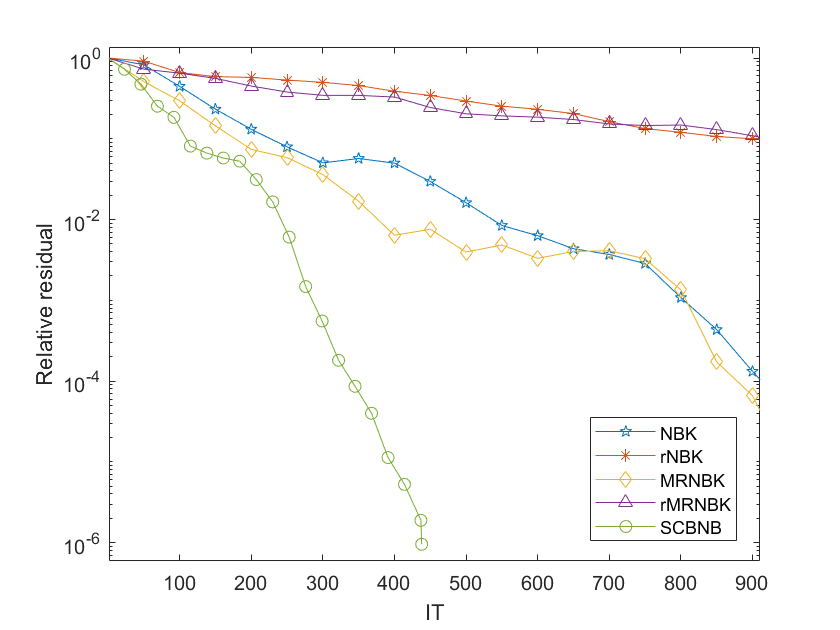}}
	\subfloat[$m=500,n=250$]{\includegraphics[width=1.7in]{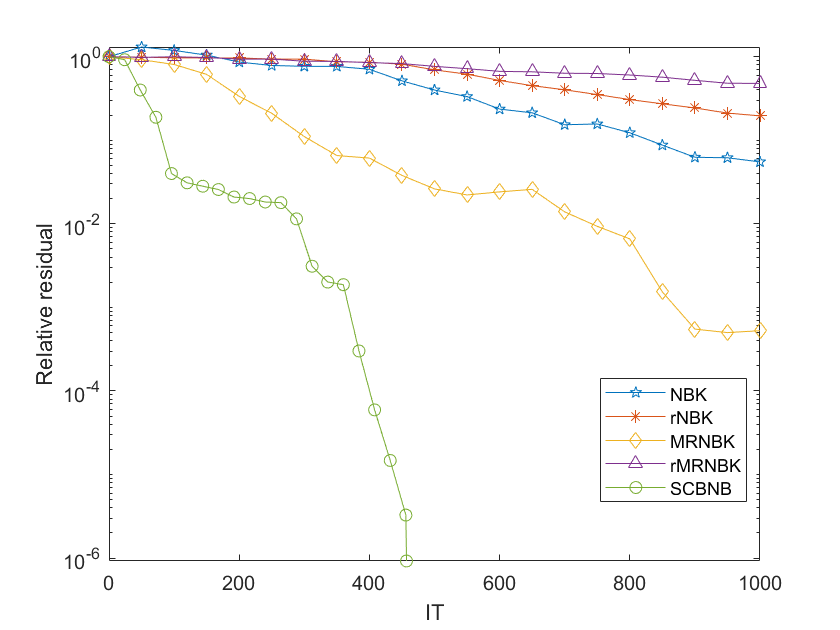}}
	\caption{Results for $A^{(i)}$ which is a random partial DCT matrix with $sp=0.1$.}
	\label{figure A=DCT0.1}
\end{figure*}

\begin{figure*}[!t]
	\centering
	\subfloat[$m=200,n=100$]{\includegraphics[width=1.7in]{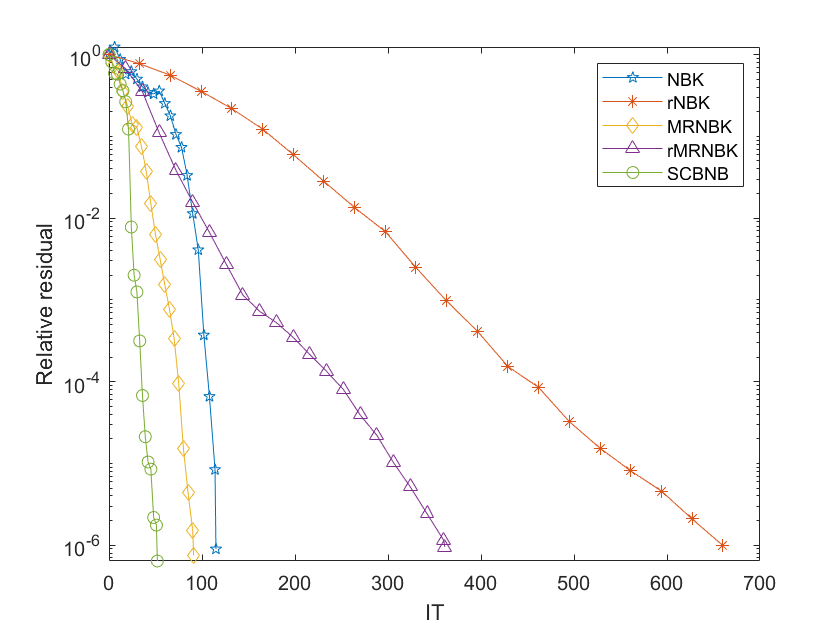}}
	\subfloat[$m=300,n=150$]{\includegraphics[width=1.7in]{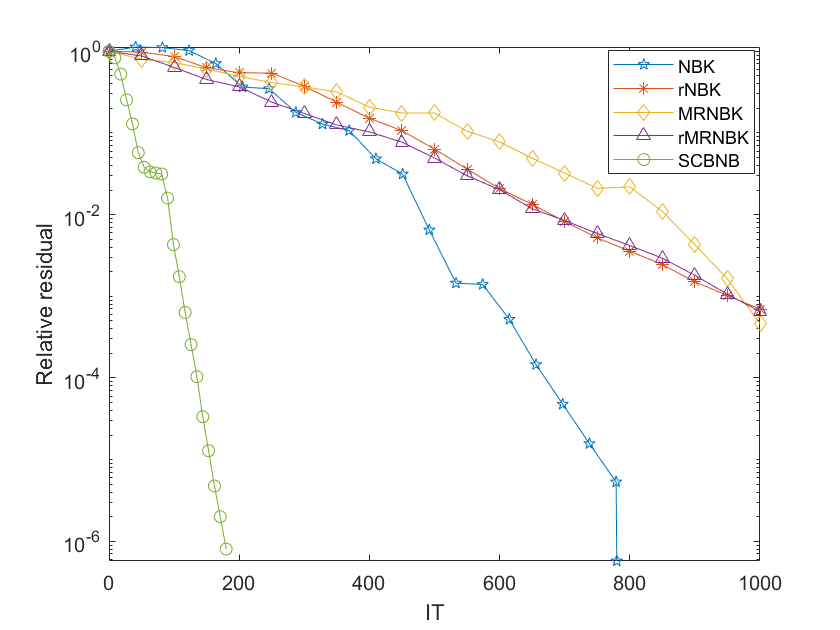}}
	\subfloat[$m=400,n=200$]{\includegraphics[width=1.7in]{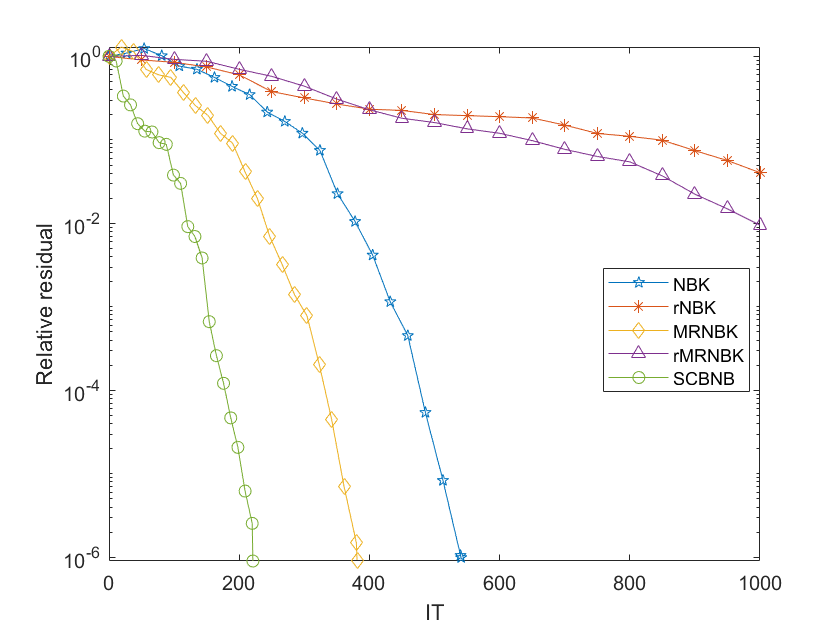}}
	\subfloat[$m=500,n=250$]{\includegraphics[width=1.7in]{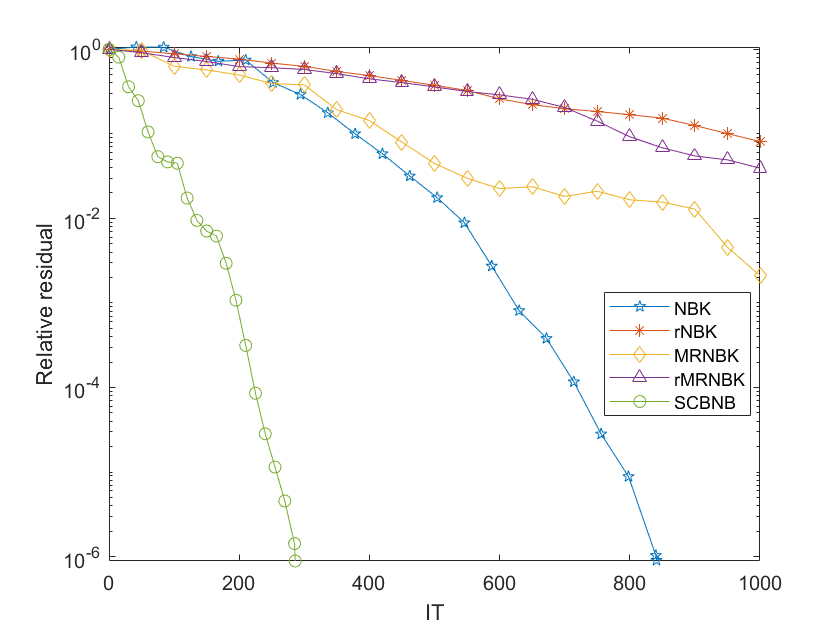}}
	\caption{Results for $A^{(i)}$ is a random partial DCT matrix with $sp=0.05$.}
	\label{figure A=DCT0.05}
\end{figure*}

\begin{figure*}[!t]
	\centering
	\subfloat[NBK]{\includegraphics[width=1.7in]{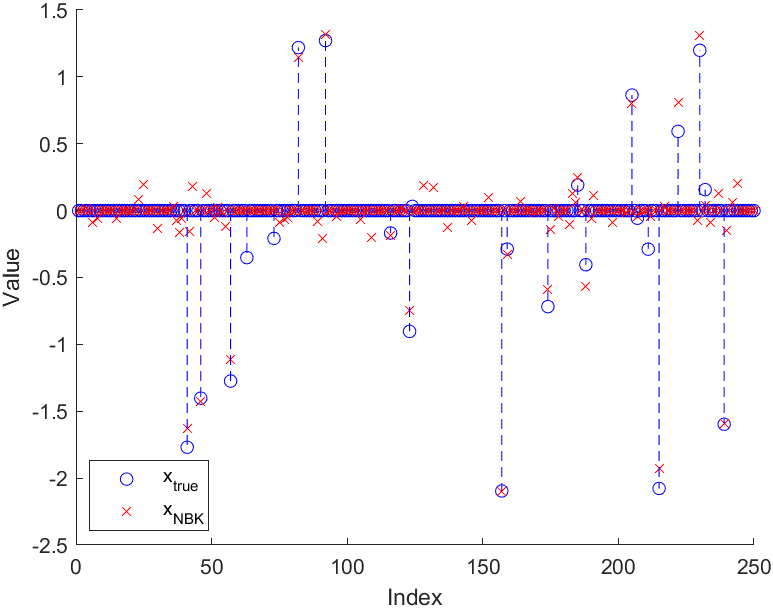}}
	\subfloat[rNBK]{\includegraphics[width=1.7in]{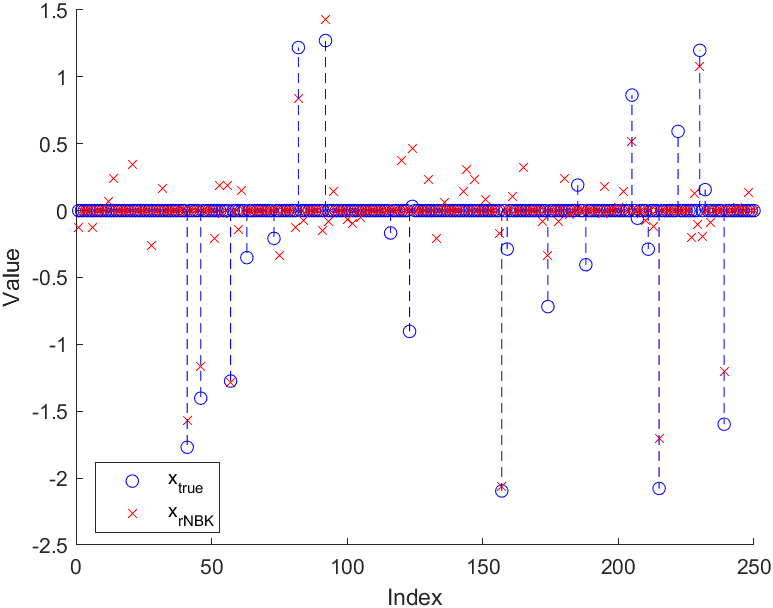}}
	\subfloat[MRNBK]{\includegraphics[width=1.7in]{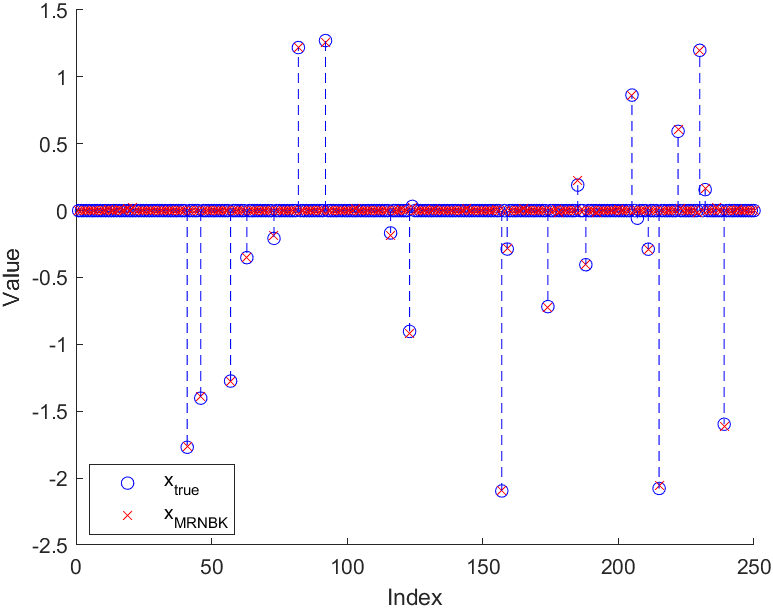}}\\
	\subfloat[rMRNBK]{\includegraphics[width=1.7in]{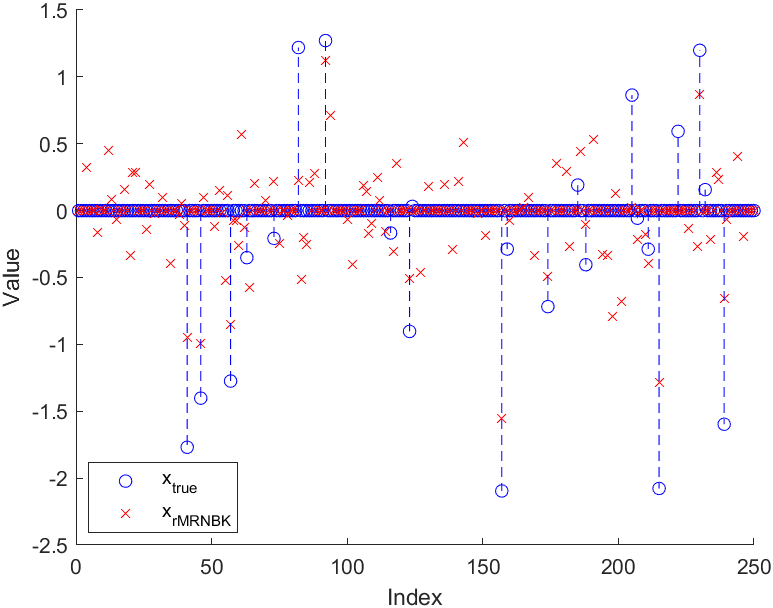}}
	\subfloat[SCBNB]{\includegraphics[width=1.7in]{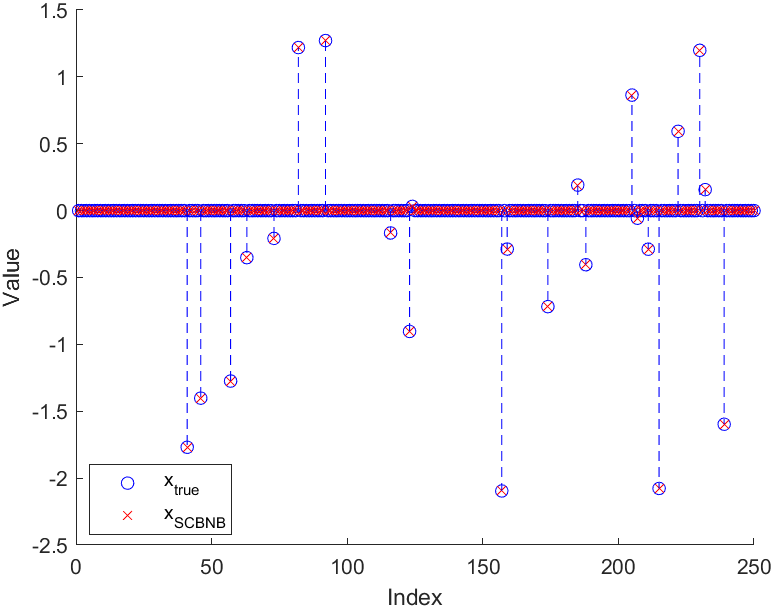}}
	\caption{The original signal and recovered signal of $A^{(i)}$ which is a random partial DCT matrix with $m=500$, $n=250$ and $sp=0.1$.}
	\label{figure DCT solution}
\end{figure*}

\begin{figure*}[!t]
	\centering
	\subfloat[NBK]{\includegraphics[width=1.7in]{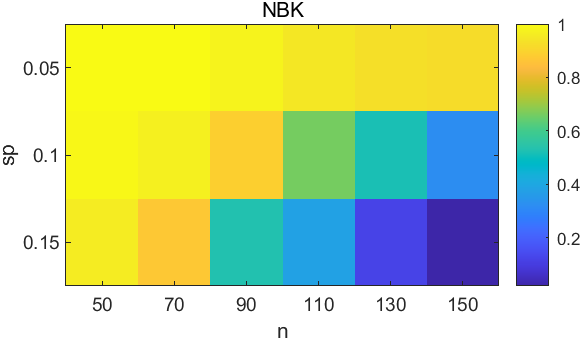}}
	\subfloat[rNBK]{\includegraphics[width=1.7in]{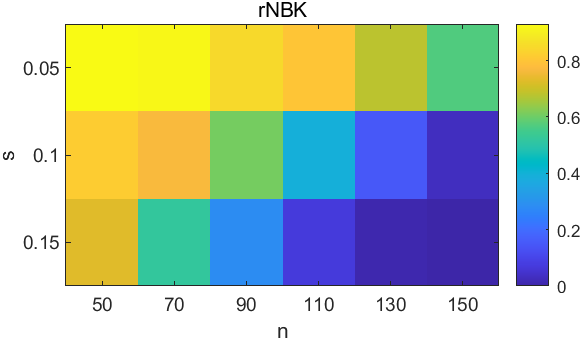}}
	\subfloat[MRNBK]{\includegraphics[width=1.7in]{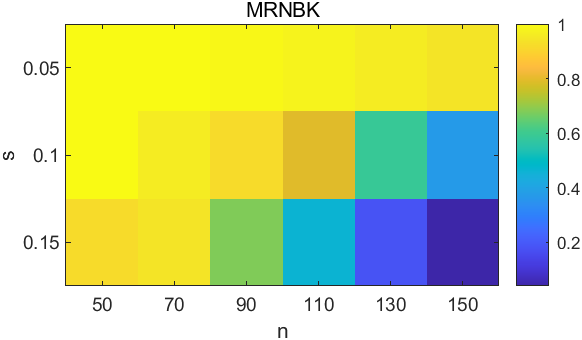}}\\
	\subfloat[rMRNBK]{\includegraphics[width=1.7in]{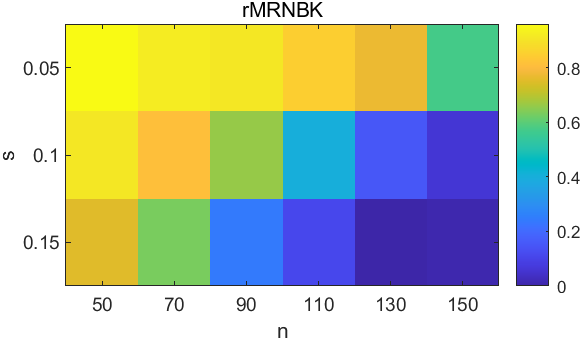}}
	\subfloat[SCBNB]{\includegraphics[width=1.7in]{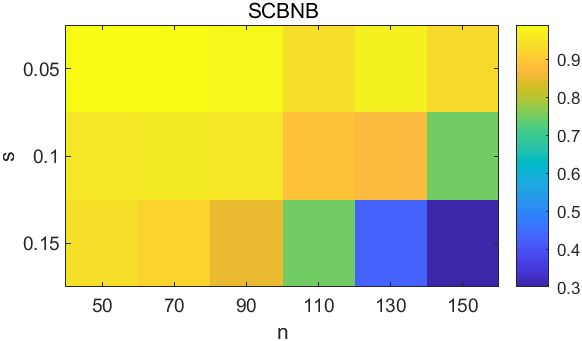}}
	\caption{Success rate of $A^{(i)}$ which is a random partial DCT matrix with $m=100$, $n=50:20:150$ and $sp=0.05:0.05:0.15$.}
	\label{figure DCT success}
\end{figure*}

\begin{exm}
	Spare image recovery
\end{exm}
In this part, we consider to recover the Mnist datasets problem. The original signal is the an $28\times28$ image $X_t$ in the Mnist dataset. Let the coefficient matrix $A$ is generated by the Matlab function rand(m,n) with $m=2000$ and $n=28^2$. Reshape the original image matrix to get the vector $\hat{x}$. The right-hand side vector is $b=A\hat{x}+e$, where $e$ is the Gaussian noise vector with the noise level of $0.01$. We use the peak signal-to-noise(PSNR) to measure the quality of the recovered image, which is defined as 
\begin{equation*}
	PSNR(X_t,X_r)=10*log_{10}\frac{\max(X_t(i,j))^2}{\frac{1/N^2}{\sum_{i=1}^{N}\sum_{j=1}^{N}|X_t(i,j)-X_r(i,j)|^2}},
\end{equation*} 
where $X_r$ is the recovered image.
Before the formal experiment, the original image needs to be normalized using the Matlab function $normalize(X_t,\text{`}range\text{'})$. We also use  $\varphi(x)=\lambda\|x\|_1+\frac{1}{2}\|x\|_2^2$ and set $\lambda=1$, the initial subgradient $x_0^*$ is a zero vector. Figure \ref{figure image} shows the results obtained from 1000 iterations of MRNBK, rMRNBK, and SCBNB methods, and the results show that the SCBNB method is the most effective.

\begin{figure*}[!t]
	\centering
	\subfloat[Original image]{\includegraphics[width=1.2in]{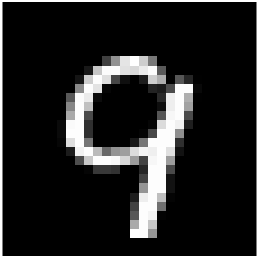}}\quad\quad\quad
	\subfloat[MRNBK]{\includegraphics[width=1.2in]{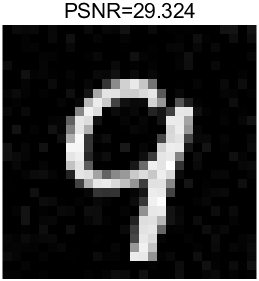}}\quad\quad\quad
	\subfloat[rMRNBK]{\includegraphics[width=1.2in]{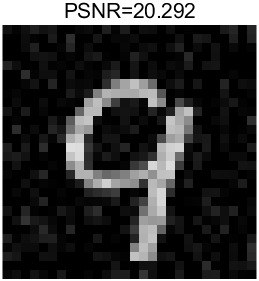}}\quad\quad\quad
	\subfloat[SCBNB]{\includegraphics[width=1.2in]{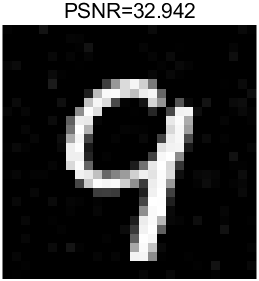}}
	\caption{Original image and recovered images of Mnist datas problem.}
	\label{figure image}
\end{figure*}

\section{Conclusion}
\label{sec4}

In this paper, we proposed the SCBNB method, which can find the sparse solution of nonlinear equations. Under certain assumptions we derived the convergence property of the SCBNB method. In the numerical experiments the IT and CPU of the SCBNB method is less than the NBK, rNBK, MRNBK and rMRNBK methods, which shows the efficiency of the new method. And other methods become very slow to converge when the matrix dimension is increased, but the SCBNB method still has an impressive convergence rate. In future, we will investigate the strategy of how to choose the block sizes.  

%
%
%
%
%
%
%
%
%
%
\bibliographystyle{IEEEtran}
\bibliography{reference1}

{\appendix[Proof of Theorem \ref{thm1}]
	
}	From Lemma \ref{lem:D}, we have
{
\begin{align}\label{eq:Dk}
	&\resizebox{0.28\hsize}{!}{$\quad D_{\varphi}^{x_{k+1}^*}(x_{k+1},\hat{x})$}\nonumber\\
	&\resizebox{0.85\hsize}{!}{$\leq D_{\varphi}^{x_{k}^*}(x_{k},\hat{x})+\langle x_{k+1}^*-x_{k}^*,x_k-\hat{x}\rangle+\frac{1}{2\gamma}\|x_{k+1}^*-x_{k}^*\|_2^2$}\nonumber\\ 
	&\resizebox{0.22\hsize}{!}{$=D_{\varphi}^{x_{k}^*}(x_{k},\hat{x})$}\nonumber\\ 
	&\quad\resizebox{0.85\hsize}{!}{$+\langle-\delta\frac{\gamma\|\nabla f_{:,\xi_k}(x_k)^Tf(x_k)\|_2^2}{\|\nabla f_{:,\xi_k}(x_k)\nabla f_{:,\xi_k}(x_k)^Tf(x_k)\|_2^2}I_{:,\xi_k}\nabla f_{:,\xi_k}(x_k)^Tf(x_k),x_k-\hat{x} \rangle$}\nonumber\\ \nonumber
	&\quad\resizebox{0.85\hsize}{!}{$+\frac{1}{2\gamma}\|-\delta\frac{\gamma\|\nabla f_{:,\xi_k}(x_k)^Tf(x_k)\|_2^2}{\|\nabla f_{:,\xi_k}(x_k)\nabla f_{:,\xi_k}(x_k)^Tf(x_k)\|_2^2}I_{:,\xi_k}\nabla f_{:,\xi_k}(x_k)^Tf(x_k)\|_2^2$}\nonumber\\ 
	&\resizebox{0.22\hsize}{!}{$= D_{\varphi}^{x_{k}^*}(x_{k},\hat{x})$}\nonumber\\
	&\quad\resizebox{0.85\hsize}{!}{$-\delta\frac{\gamma\|\nabla f_{:,\xi_k}(x_k)^Tf(x_k)\|_2^2}{\|\nabla f_{:,\xi_k}(x_k)\nabla f_{:,\xi_k}(x_k)^Tf(x_k)\|_2^2}f(x_k)^T\nabla f_{:,\xi_k}(x_k)I_{:,\xi_k}^T(x_k-\hat{x})$}.\nonumber\\ 
	&\quad\resizebox{0.45\hsize}{!}{$+\frac{\gamma\delta^2}{2}\frac{\|\nabla f_{:,\xi_k}(x_k)^Tf(x_k)\|_2^6}{\|\nabla f_{:,\xi_k}(x_k)\nabla f_{:,\xi_k}(x_k)^Tf(x_k)\|_2^4}$}.
\end{align}
}
By Lemma \ref{lem:sigma} we have

\begin{align*}
	&\|\nabla f_{:,\xi_k}(x_k)\nabla f_{:,\xi_k}(x_k)^Tf(x_k)\|_2\\
	&\leq\sigma_{\max}(\nabla f_{:,\xi_k}(x_k))\|\nabla f_{:,\xi_k}(x_k)^Tf(x_k)\|_2\\ 
	&\leq\sigma_{\max}(\nabla f(x_k))\|\nabla f_{:,\xi_k}(x_k)^Tf(x_k)\|_2\\
	&\leq\overline{\sigma}\|\nabla f_{:,\xi_k}(x_k)^Tf(x_k)\|_2,
\end{align*}
and 
\begin{align*}
	&\|\nabla f_{:,\xi_k}(x_k)\nabla f_{:,\xi_k}(x_k)^Tf(x_k)\|_2\\
	&\geq\sigma_{\min}(\nabla f_{:,\xi_k}(x_k))\|\nabla f_{:,\xi_k}(x_k)^Tf(x_k)\|_2\\
	&\geq\sigma_{\min}(\nabla f(x_k))\|\nabla f_{:,\xi_k}(x_k)^Tf(x_k)\|_2\\
	&\geq\underline{\sigma}\|\nabla f_{:,\xi_k}(x_k)^Tf(x_k)\|_2.
\end{align*}
 
Taking the conditional expectation on inequality (\ref{eq:Dk}), we can have
\begin{align*}
	&\resizebox{0.28\hsize}{!}{$\mathbf{E}_k[D_{\varphi}^{x_{k+1}^*}(x_{k+1},\hat{x})]$}\\
	&\resizebox{0.75\hsize}{!}{$\leq D_{\varphi}^{x_{k}^*}(x_{k},\hat{x})+\frac{\delta^2\gamma}{2\underline{\sigma}^4\tau}\sum_{\xi_k}\|\nabla f_{:,\xi_k}(x_k)^Tf(x_k)\|_2^2$}\\
	&\quad\resizebox{0.9\hsize}{!}{$-\frac{\delta\gamma}{\tau}\sum_{\xi_k}\frac{\|\nabla f_{:,\xi_k}(x_k)^Tf(x_k)\|_2^2}{\|\nabla f_{:,\xi_k}(x_k)\nabla f_{:,\xi_k}(x_k)^Tf(x_k)\|_2^2}\left(f(x_k)^T\nabla f_{:,\xi_k}(x_k)I_{:,\xi_k}^T(x_k-\hat{x})\right)$}\\
	&\resizebox{0.75\hsize}{!}{$= D_{\varphi}^{x_{k}^*}(x_{k},\hat{x})+\frac{\delta^2\gamma}{2\underline{\sigma}^4\tau}\sum_{\xi_k}\|\nabla f_{:,\xi_k}(x_k)^Tf(x_k)\|_2^2$}\\
	&\quad\resizebox{0.65\hsize}{!}{$-\frac{\delta\gamma}{\tau}\left[\frac{1}{\overline{\sigma}^2}\sum_{\xi_k^+}\left(f(x_k)^T\nabla f_{:,\xi_k}(x_k)I_{:,\xi_k}^T(x_k-\hat{x})\right)\right .$}\\
	&\quad\resizebox{0.65\hsize}{!}{$\left .+\frac{1}{\underline{\sigma}^2}\sum_{\xi_k^-}\left(f(x_k)^T\nabla f_{:,\xi_k}(x_k)I_{:,\xi_k}^T(x_k-\hat{x})\right)\right]$}\\
	&\resizebox{0.6\hsize}{!}{$=D_{\varphi}^{x_{k}^*}(x_{k},\hat{x})+\frac{\delta^2\gamma}{2\underline{\sigma}^4\tau}\|\nabla f(x_k)^Tf(x_k)\|_2^2$}\\
	&\quad\resizebox{0.65\hsize}{!}{$-\frac{\delta\gamma}{\tau}\left[\frac{1}{\overline{\sigma}^2}\sum_{\xi_k}\left(f(x_k)^T\nabla f_{:,\xi_k}(x_k)I_{:,\xi_k}^T(x_k-\hat{x})\right)\right .$}\\
	&\quad\resizebox{0.8\hsize}{!}{$\left .+\left(\frac{1}{\underline{\sigma}^2}-\frac{1}{\overline{\sigma}^2}\right)\sum_{\xi_k^-}\left(f(x_k)^T\nabla f_{:,\xi_k}(x_k)I_{:,\xi_k}^T(x_k-\hat{x})\right)\right]$}\\
	&\leq\resizebox{0.9\hsize}{!}{$ D_{\varphi}^{x_{k}^*}(x_{k},\hat{x})-\frac{\gamma\delta}{\tau\overline{\sigma}^2}f(x_k)^T\nabla f(x_k)(x_k-\hat{x})+\frac{\delta^2\gamma}{2\underline{\sigma}^4\tau}\|\nabla f(x_k)^Tf(x_k)\|_2^2$}\\
	&\quad\resizebox{0.8\hsize}{!}{$+\frac{\delta\gamma}{\tau}\left(\frac{1}{\underline{\sigma}^2}-\frac{1}{\overline{\sigma}^2}\right)\sum_{\xi_k^-}\|f(x_k)^T\nabla f_{:,\xi_k}(x_k)\|\|x_k-\hat{x}\|$}\\
	&\quad\resizebox{0.8\hsize}{!}{$+\frac{\delta\gamma}{\tau}\left(\frac{1}{\underline{\sigma}^2}-\frac{1}{\overline{\sigma}^2}\right)\sum_{\xi_k^+}\|f(x_k)^T\nabla f_{:,\xi_k}(x_k)\|\|x_k-\hat{x}\|$}\\
	&=\resizebox{0.8\hsize}{!}{$D_{\varphi}^{x_{k}^*}(x_{k},\hat{x})+\frac{\gamma\delta}{2\tau}\left(\frac{2f(x_k)^T}{\overline{\sigma}^2}(f(x_k)-f(\hat{x})-\nabla f(x_k)(x_k)-\hat{x})\right .$}\\
	&\quad\resizebox{0.6\hsize}{!}{$\left .+\frac{\delta}{\underline{\sigma}^4}\|\nabla f(x_k)^Tf(x_k)\|_2^2-\frac{2\|f(x_k)\|_2^2}{\overline{\sigma}^2}\right)$}\\
	&\resizebox{0.65\hsize}{!}{$\quad+\frac{\delta\gamma}{\tau}\left(\frac{1}{\underline{\sigma}^2}-\frac{1}{\overline{\sigma}^2}\right)\|f(x_k)^T\nabla f(x_k)\|\|x_k-\hat{x}\|$}\\
	&\leq \resizebox{0.9\hsize}{!}{$D_{\varphi}^{x_{k}^*}(x_{k},\hat{x})+\frac{\gamma\delta}{2\tau}\left(\frac{2\|f(x_k)\|_2}{\overline{\sigma}^2}\|f(x_k)-f(\hat{x})-\nabla f(x_k)(x_k)-\hat{x}\|_2\right .$}\\
	&\resizebox{0.45\hsize}{!}{$\quad\left .+\left(\frac{\delta\overline{\sigma}^2}{\underline{\sigma}^4}-\frac{2}{\overline{\sigma}^2}\right)\|f(x_k)\|_2^2\right)$}\\
	&\resizebox{0.65\hsize}{!}{$\quad+\frac{\gamma\delta}{\tau}\left(\frac{1}{\underline{\sigma}^2}-\frac{1}{\overline{\sigma}^2}\right)\|f(x_k)^T\nabla f(x_k)\|\|x_k-\hat{x}\|$}\\
	&=A+B.
\end{align*}
,where $\sum_{\xi_k^+}\left(f(x_k)^T\nabla f_{:,\xi_k}(x_k)I_{:,\xi_k}^T(x_k-\hat{x})\right)$ represents the sum of all $f(x_k)^T\nabla f_{:,\xi_k}(x_k)I_{:,\xi_k}^T(x_k-\hat{x})>0$, and $\sum_{\xi_k^-}\left(f(x_k)^T\nabla f_{:,\xi_k}(x_k)I_{:,\xi_k}^T(x_k-\hat{x})\right)$ represents the sum of all $f(x_k)^T\nabla f_{:,\xi_k}(x_k)I_{:,\xi_k}^T(x_k-\hat{x})<0$.

By Assumption \ref{asm1} ii), iv) and Lemma \ref{lem:1+eta^2}, we can obtain
\begin{align*}
	A
	&\leq D_{\varphi}^{x_{k}^*}(x_{k},\hat{x})+\frac{\gamma\delta}{2\tau}\left(\frac{2\eta\|f(x_k)\|^2_2}{\overline{\sigma}^2}+\left(\frac{\delta\overline{\sigma}^2}{\underline{\sigma}^4}-\frac{2}{\overline{\sigma}^2}\right)\|f(x_k)\|_2^2\right)\\
	&=D_{\varphi}^{x_{k}^*}(x_{k},\hat{x})-\frac{\gamma}{2\tau}\left(\frac{2(1-\eta)\delta}{\overline{\sigma}^2}-\frac{\overline{\sigma}^2\delta^2}{\underline{\sigma}^4}\right)\|f(x_k)\|^2_2\\
	&\leq\resizebox{1\hsize}{!}{$ D_{\varphi}^{x_{k}^*}(x_{k},\hat{x})-\frac{\gamma}{2\tau}\left(\frac{2(1-\eta)\delta}{\overline{\sigma}^2}-\frac{\overline{\sigma}^2\delta^2}{\underline{\sigma}^4}\right)\frac{1}{(1+\eta)^2}\|\nabla f(\hat{x})(x_k-\hat{x})\|_2^2$}\\
	&\leq \resizebox{1\hsize}{!}{$D_{\varphi}^{x_{k}^*}(x_{k},\hat{x})-\frac{\gamma}{2\tau}\left(\frac{2(1-\eta)\delta}{\overline{\sigma}^2}-\frac{\overline{\sigma}^2\delta^2}{\underline{\sigma}^4}\right)\frac{1}{(1+\eta)^2}\sigma_{\min}^2(\nabla f(\hat{x}))\|x_k-\hat{x}\|_2^2$}\\
	&\leq D_{\varphi}^{x_{k}^*}(x_{k},\hat{x})-\frac{\gamma}{2\tau}\left(\frac{2(1-\eta)\delta}{\overline{\sigma}^2}-\frac{\overline{\sigma}^2\delta^2}{\underline{\sigma}^4}\right)\frac{1}{(1+\eta)^2}\underline{\sigma}^2\|x_k-\hat{x}\|_2^2\\
\end{align*} 
\begin{align*}
	B
	&\leq\frac{\gamma\delta}{\tau}\left(\frac{1}{\underline{\sigma}^2}-\frac{1}{\overline{\sigma}^2}\right)\overline{\sigma}\|f(x_k)\|\|(x_k-\hat{x})\|\\
	&\leq\frac{\gamma\delta(\overline{\sigma}^2-\underline{\sigma}^2)}{\tau\underline{\sigma}^2(1-\eta)}\|x_k-\hat{x}\|_2^2.
\end{align*}
By Lemma \ref{lem:D<=(x*-y*)(x-y)}, we can obtain
\begin{equation*}
	D_{\varphi}^{x_k^*}(x_k,\hat{x})\leq \langle x_k^*-\hat{x}^*,x_k-\hat{x}\rangle.
\end{equation*}
Combining $\varphi(x)$ is $M$-smooth and Lemma \ref{lem:smooth}, we can get
\begin{equation*}
	D_{\varphi}^{x_k^*}(x_k,\hat{x})\leq M\|x_k-\hat{x}\|_2^2.
\end{equation*}
Then we can get 
\begin{align*}
	&\mathbf{E}_k[D_{\varphi}^{x_{k+1}^*}(x_{k+1},\hat{x})]\\
	&\leq\resizebox{1\hsize}{!}{$ D_{\varphi}^{x_{k}^*}(x_{k},\hat{x})-\frac{\gamma}{2\tau}\left(\frac{2(1-\eta)\delta}{\overline{\sigma}^2}-\frac{\overline{\sigma}^2\delta^2}{\underline{\sigma}^4}\right)\frac{1}{(1+\eta)^2}\underline{\sigma}^2\|(x_k-\hat{x})\|_2^2+\frac{\gamma\delta(\overline{\sigma}^2-\underline{\sigma}^2)}{\tau\underline{\sigma}^2(1-\eta)}\|x_k-\hat{x}\|_2^2$}\\
	&=\resizebox{1\hsize}{!}{$D_{\varphi}^{x_{k}^*}(x_{k},\hat{x})-\left[\left(\frac{\gamma\underline{\sigma}^2(1-\eta)}{\tau(1+\eta)^2\overline{\sigma}^2}-\frac{\gamma(\overline{\sigma}^2-\underline{\sigma}^2)}{\tau\underline{\sigma}^2(1-\eta)}\right)\delta-\frac{\gamma\overline{\sigma}^2}{2\tau(1+\eta)^2\underline{\sigma}^2}\delta^2\right]\|x_k-\hat{x}\|_2^2$}\\
	&\leq\resizebox{1\hsize}{!}{$ D_{\varphi}^{x_{k}^*}(x_{k},\hat{x})-\frac{1}{M}\left[\frac{2\gamma\underline{\sigma}^4(1-\eta)^2-2\gamma(\overline{\sigma}^2-\underline{\sigma}^2)(1+\eta)^2\overline{\sigma}^2}{2\tau(1+\eta)^2\overline{\sigma}^2\underline{\sigma}^2(1-\eta)}\delta-\frac{\gamma\underline{\sigma}^4(1-\eta)}{2\tau(1+\eta)^2\overline{\sigma}^2\underline{\sigma}^2(1-\eta)}\delta^2\right]D_{\varphi}^{x_{k}^*}(x_{k},\hat{x})$}\\
	&=(1-c)D_{\varphi}^{x_{k}^*}(x_{k},\hat{x}).
\end{align*}
Taking the full expectation of both sides, we can conclude (\ref{eq: D_k<D_0}).
By Lemma \ref{lem:D<=(x*-y*)(x-y)}, we have
\begin{equation*}
	\frac{\gamma}{2}\|x_k-\hat{x}\|_2^2\leq D_\varphi^{x^*}(x_k,\hat{x}).
\end{equation*}
If $x_0=x_0^*=0$ we can obtain $D_\varphi^{x_0^*}(x_0,\hat{x})=\varphi(\hat{x})$.
Then we can get (\ref{eq: x_k-x*}).
\begin{rem}
	To guarantee $c>0$, we use the quadratic property of $\delta$, leading to the bound $0<\delta<2\frac{\underline{\sigma}^4(1-\eta)^2-(\overline{\sigma}^2-\underline{\sigma}^2)(1+\eta)^2\overline{\sigma}^2}{\overline{\sigma}^4(1-\eta)}$ under the prerequisite $\underline{\sigma}^4(1-\eta)^2-(\overline{\sigma}^2-\underline{\sigma}^2)(1+\eta)^2\overline{\sigma}^2>0$. This thus requires ensuring that
	\begin{equation*}
		(\underline{\sigma}^4-\overline{\sigma}^4+\underline{\sigma}^2\overline{\sigma}^2)\eta^2+2(-\underline{\sigma}^4-\overline{\sigma}^4+\underline{\sigma}^2\overline{\sigma}^2)\eta+(\underline{\sigma}^4-\overline{\sigma}^4+\underline{\sigma}^2\overline{\sigma}^2)>0.
	\end{equation*}
	Let $a=\underline{\sigma}^4$, $b=-\overline{\sigma}^4+\underline{\sigma}^2\overline{\sigma}^2$ and substituting them into the above inequality, we obtain
	\begin{equation}\label{ineq:eta}
		g(\eta)=  (a+b)\eta^2+2(-a+b)\eta+(a+b)>0.
	\end{equation}
	It is easy to see that $a>0$ and $b\leq0$. The discriminant of $g(\eta)$ is 
	\begin{equation*}
		\Delta=4(a-b)^2-4(a+b)^2=-16ab\geq0.
	\end{equation*}
	
	i) When $a+b=0$, $g(\eta)=2(-a+b)\eta<0$. In this case there exists no interval for $\eta$ that satisfies the required condition.
	
	ii) When $a+b\neq0$ and $b=0$ which is equivalent to $\overline{\sigma}=\underline{\sigma}$, we can obtain $\Delta=0$. The root of $g(\eta)=0$ is $\eta=1$.
	By Assumption \ref{asm1} ii), we know that $0<\eta<1$. 
	
	If $a+b>0$, then the interval satisfying inequality (\ref{ineq:eta}) is $0<\eta<1$. 
	
	If $a+b<0$, then there exists no interval for $\eta$ that satisfies the required condition.
	
	iii) When $a+b\neq0$ and $b\neq0$, we can obtain $\Delta>0$. The root of $g(\eta)=0$ is $\eta_1=\frac{a-b}{a+b}+2\frac{\sqrt{|ab|}}{a+b}$, $\eta_2=\frac{a-b}{a+b}-2\frac{\sqrt{|ab|}}{a+b}$.
	
	If $a+b>0$, then the interval satisfying inequality (\ref{ineq:eta}) is $\eta\in(-\infty,\frac{a-b}{a+b}-2\frac{\sqrt{|ab|}}{a+b})\cup(\frac{a-b}{a+b}+2\frac{\sqrt{|ab|}}{a+b},+\infty)$. Since $0<\eta<1$, $\frac{a-b}{a+b}+2\frac{\sqrt{|ab|}}{a+b}>1$ and $\frac{a-b}{a+b}-2\frac{\sqrt{|ab|}}{a+b}=\frac{(\sqrt{|a|}-\sqrt{|b|})^2}{a+b}>0$, the range of $\eta$ is 
	\begin{equation*}
		0<\eta<\min\{1,\frac{a-b}{a+b}-2\frac{\sqrt{|ab|}}{a+b}\}.
	\end{equation*}
	
	If $a+b<0$, then the interval satisfying inequality (\ref{ineq:eta}) is $\eta\in(\frac{a-b}{a+b}+2\frac{\sqrt{|ab|}}{a+b},\frac{a-b}{a+b}-2\frac{\sqrt{|ab|}}{a+b})$. It is easy to see that $a-b>0$, $\sqrt{|ab|}>0$, then $\frac{a-b}{a+b}+2\frac{\sqrt{|ab|}}{a+b}<0$ and $\frac{a-b}{a+b}-2\frac{\sqrt{|ab|}}{a+b}=\frac{(\sqrt{|a|}-\sqrt{|b|})^2}{a+b}<0$. Since $0<\eta<1$, there exists no interval for $\eta$ that satisfies the required condition.
	
	In summary, only when $a+b>0$ is there a chance to satisfy the condition. When $b=0$, there is $0<\eta<1$. When $b\neq0$, there is $0<\eta<\min\{1,\frac{a-b}{a+b}-2\frac{\sqrt{|ab|}}{a+b}\}$ satisfy inequality (\ref{ineq:eta}).
\end{rem}
\end{document}